\newtheorem{theorem}{Theorem}[section]
\newtheorem{definition}{Definition}[section]
\newtheorem{corollary}{Corollary}[section]
\newtheorem{lemma}{Lemma}[section]
\newtheorem{proposition}{Proposition}[section]
\theoremstyle{definition}
\newtheorem{remarks}{Remarks}[section]
\newtheorem{example}{Example}[section]
\newcommand{\Lip}{{\rm Lip}}
\newcommand{\isd}{\imath_d}
\newcommand{\dte}{d_\eta}
\newcommand{\isde}{\imath_{\dte}}
\theoremstyle{definition}
\newcommand{\R}{{\mathbb R}}
\newcommand{\calC}{{\mathcal C}}
\newcommand{\calZ}{{\mathcal Z}}
\renewcommand{\to}{\longrightarrow}
\newsavebox{\savepar}
\numberwithin{equation}{section}
\newcounter{labelflag} \setcounter{labelflag}{0}
\newcommand{\labelon}{\setcounter{labelflag}{1}}
\newcommand{\Label}[1]{
                       \ifnum\thelabelflag=1
                          \ifmmode
                             \makebox[0in][l]{\qquad\fbox{\rm#1}}
                          \else
                             \marginpar{\vspace{0.7\baselineskip}
                                        \hspace{-1.1\textwidth}
                                        \fbox{\rm#1}}
                          \fi
                       \fi
                       \label{#1}
                      }
\begin{document}

\title{Central Limit Theorems on Compact Metric Spaces}
\author{Steve Rosenberg}
\address{Department of Mathematics and Statistics\\Boston University}
\email{sr@math.bu.edu}
\author{Jie Xu}
\address{Department of Mathematics and Statistics\\Boston University}
\email{xujie@math.bu.edu}
%\date{}	
\maketitle						% Activate to display a given date or no date

%\section{}
%\subsection{}

% This is the beginning of Section 1.
 \section{Introduction}
We produce a series of Central Limit Theorems (CLTs) associated to compact metric measure spaces $(K,d,\eta)$, with $\eta$ a reasonable probability measure.   For the first CLT, we can ignore $\eta$
by isometrically embedding $K$ into $\calC(K)$, the space of
continuous functions on $K$ with the sup norm,  and then applying known CLTs for sample means on Banach spaces (Theorem \ref{thm:3.1}).  However, the sample mean makes no sense back on $K$, so using $\eta$ we develop a CLT for the sample Fr\'echet mean (Corollary \ref{cor:3.1}).  This involves working on the closed convex hull of the embedded image of $K$. 
To work in the easier Hilbert space setting of $L^2(K,\eta)$, we have to modify the metric $d$ to a related metric
$\dte$.  We  obtain an $L^2$-CLT for both the sample mean and the sample Fr\'echet mean (Theorem
\ref{thm:5.1}), and we relate the Fr\'echet sample and population means on the closed convex hull to the Fr\'echet means on the image of $K$.
Since the $L^2$ and $L^\infty$ norms play important roles, in \S6 we develop a metric-measure
criterion relating $d$ and  $\eta$ under which 
all $L^p$ norms are equivalent.

% This is the beginning of Section 2.
\section{Background Material}
Throughout the paper, $(K,d)$ will be a compact metric space.   Recall that a $ G $-valued random variable $ X $ is a function
$
X : \Omega \rightarrow G$,
where $ (\Omega, \mathcal{F}, P) $ is a fixed probability triple. The induced measure/distribution on $ G $ is given by $ X_{*}(P) $, with
\begin{equation*}
X_{*}(P)(A) = P(X^{-1}(A)), \ A \subset G.
\end{equation*}

We recall the setup and statement of a Central Limit Theorem on Banach spaces due to Zinn.

\begin{definition} \label{defone}(i) Let $G$ be a Banach space.  A probability measure $\gamma$ on $G$ is  a Gaussian Radon 
measure  if for every nontrivial linear functional $L:G\to \R$, on $ G $, the pushforward measure $ L_{*}(\gamma) $ is a non-degenerate Gaussian measure on $ \mathbb{R} $, i.e., a standard Gaussian measure with non-zero variance.

(ii) Let $ X_{1}, \dotso, X_{n}, \dotso $ be  any set of $ G $-valued i.i.d. random variables with common distribution $ \mu $. $ \mu $ satisfies the G-Central Limit Theorem (G-CLT) on  $ G $ if there exists a Guassian Radon probability measure $ \gamma $ on $ G $ such that the distributions, $ \mu_{n} $, of $ \frac{X_{1} + \dotso + X_{n}}{\sqrt{n}} $ converge, i.e., for every bounded -continuous real-valued function $ f $ on $ G $,
\begin{equation*}
\int_{G} f d\mu_{n} \rightarrow \int_{G} f d\gamma.
\end{equation*}

(iii)  The metric $d$ on $K$ implies Gaussian continuity (or $d$ is CGI)   if whenever $ \lbrace X(s) \rbrace_{s \in K} $ is a separable Gaussian process such that
\begin{equation*}
{\mathbb E}\left[\lvert X(t) - X(s) \rvert^{2}\right]\leqslant d^{2}(t, s),
\end{equation*}
\noindent then $ X $ has continuous sample paths a.s.. 
\end{definition}

Let $ H_{d}(K, \epsilon) = \log (N_{d}(K, \epsilon)) $, where  $ N_{d}(K, \epsilon) $ is  the smallest number of 
$d$-balls of diameter at most $ 2 \epsilon $ which cover $ K $.

\begin{proposition}\cite[Thm.~3.1]{dudley} \label{proptwoone} If 
\begin{equation}\label{eq2.1}
\int_{0}^{\infty} H_{d}^{1/2} (K, u) du < \infty,
\end{equation}
\noindent then $ d$ is GCI.
\end{proposition}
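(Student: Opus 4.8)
The plan is to run Dudley's metric‑entropy chaining argument for the canonical pseudometric of the process. First I would reduce to the centered case. Writing $m(s)=\mathbb{E}[X(s)]$, the hypothesis $\mathbb{E}\big[|X(t)-X(s)|^2\big]\le d^2(t,s)$ forces $|m(t)-m(s)|\le d(t,s)$, so $m$ is $d$‑continuous; hence $X$ has a.s.\ continuous sample paths iff $X-m$ does, and $X-m$ is a centered Gaussian process whose increment variances are still dominated by $d^2$. So assume $\mathbb{E}[X(s)]=0$. Next, the intrinsic pseudometric $\rho(s,t)=\big(\mathbb{E}|X(t)-X(s)|^2\big)^{1/2}\le d(t,s)$ satisfies $N_\rho(K,\epsilon)\le N_d(K,\epsilon)$, so \eqref{eq2.1} also holds with $\rho$ in place of $d$; moreover $K$ being $d$‑compact makes it totally bounded, $H_d(K,\cdot)$ is finite and nonincreasing, and the integral is effectively over $(0,\operatorname{diam}_d(K)]$.

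Now for the core construction. Put $\epsilon_k=2^{-k}\operatorname{diam}_d(K)$ and for each $k\ge 0$ fix a $d$‑net $T_k\subset K$ realizing $N_d(K,\epsilon_k)$, together with a nearest‑point map $\pi_k:K\to T_k$ with $d(s,\pi_k(s))\le 2\epsilon_k$. Telescoping gives
\begin{equation*}
X(\pi_n(s))=X(\pi_0(s))+\sum_{k=1}^{n}\big(X(\pi_k(s))-X(\pi_{k-1}(s))\big),
\end{equation*}
and each link satisfies $d(\pi_k(s),\pi_{k-1}(s))\le 3\epsilon_{k-1}$. Since the number of distinct links at level $k$ (i.e.\ distinct pairs $(\pi_{k-1}(s),\pi_k(s))$) is at most $N_d(K,\epsilon_{k-1})N_d(K,\epsilon_k)\le N_d(K,\epsilon_k)^2$, and each link is a centered Gaussian of variance $\le (3\epsilon_{k-1})^2$, the Gaussian tail bound $P(|Z|>u)\le 2e^{-u^2/2\sigma^2}$ together with a union bound yields, for any $u_k>0$,
\begin{equation*}
P\Big(\sup_{s\in K}\big|X(\pi_k(s))-X(\pi_{k-1}(s))\big|>u_k\Big)\le 2\,N_d(K,\epsilon_k)^2\exp\!\Big(-\frac{u_k^2}{2(3\epsilon_{k-1})^2}\Big).
\end{equation*}

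Then I would choose $u_k=C\,\epsilon_{k-1}\big(H_d(K,\epsilon_k)^{1/2}+\sqrt{k}\big)$ for a suitable absolute constant $C$, so that the right‑hand side above is bounded by a summable sequence (the $e^{2H_d}$ from $N_d^2$ is swallowed by $-u_k^2/(2(3\epsilon_{k-1})^2)$, leaving a geometric factor). By Borel--Cantelli there is a.s.\ a random $k_0$ with $\sup_{s}|X(\pi_k(s))-X(\pi_{k-1}(s))|\le u_k$ for all $k\ge k_0$. The decisive point is the integral test: monotonicity of $H_d$ gives $\epsilon_{k}H_d(K,\epsilon_{k})^{1/2}\le c\int_{\epsilon_{k+1}}^{\epsilon_{k}}H_d^{1/2}(K,u)\,du$, so \eqref{eq2.1} yields $\sum_k\epsilon_{k-1}H_d(K,\epsilon_k)^{1/2}<\infty$, and $\sum_k\epsilon_{k-1}\sqrt k<\infty$ trivially; hence $\sum_k u_k<\infty$. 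Therefore the partial sums $X(\pi_n(\cdot))$ converge uniformly on $K$ almost surely, and since $\pi_n(s)\to s$ for every $s\in K$, the common limit is a $d$‑uniformly continuous version of $X$; by separability it agrees a.s.\ with $X$ on a countable dense set, which forces $X$ itself to have continuous paths a.s. Quantitatively, for $d(s,t)\le\epsilon_n$ one bounds $|X(s)-X(t)|$ by the two chain tails $\sum_{k>n}2u_k$ plus $|X(\pi_n(s))-X(\pi_n(t))|$, the last term being controlled because $d(\pi_n(s),\pi_n(t))\le 3\epsilon_n$ and there are at most $N_d(K,\epsilon_n)^2$ such pairs.

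The main obstacle is the bookkeeping in the last step: calibrating the constant $C$ and the auxiliary $\sqrt k$ margin so that the Borel--Cantelli series converges \emph{while} keeping $\sum_k u_k<\infty$, and then upgrading the a.s.\ uniform convergence of $X(\pi_n(\cdot))$ to genuine sample‑path continuity of $X$ on all of $K$ rather than merely on the skeleton $\bigcup_k T_k$ — this is precisely where separability of the Gaussian process enters. Everything else is the standard Gaussian tail estimate combined with the comparison between the dyadic entropy sum and the entropy integral \eqref{eq2.1}. $\qed$
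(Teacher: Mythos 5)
The paper offers no proof of this proposition at all: it is imported verbatim as a citation of Dudley's theorem, so there is nothing internal to compare your argument against. Your sketch is the standard metric-entropy chaining proof of that cited result (reduction to the centered case, dyadic nets, Gaussian tails plus union bound, Borel--Cantelli, comparison of the dyadic entropy sum with the entropy integral, and separability to pass from the continuous modification to $X$ itself), and it is essentially correct as a reconstruction of the reference's argument.
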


 Let $ \mathcal{C}(K) $ be the Banach space of continuous functions on the compact metric space $ (K, d) $ equipped with the sup-norm $ \lVert \cdot \rVert_{\infty} $. 
 $\calC(K)$ becomes a complete metric space with the induced  distance function $ d_{\infty} $ by $ d_{\infty}(f, g) = \lVert f - g \rVert_{\infty}, \forall f, g \in \mathcal{C}(K) $.

\begin{definition} For the compact metric space $ (K, d) $, set \begin{equation*}
\Lip(d) = \lbrace x \in \mathcal{C}(K) : %q(x) = 
 \sup_{t \neq s} \frac{\lvert x(t) - x(s) \rvert}{d(t, s)} < \infty \rbrace.
\end{equation*}
\end{definition}

 $ \Lip(d) $ is nonempty (by letting $x$ be a constant function). We check that $\Lip(d)$ is closed. If  $ \lbrace x_{k} \rbrace \in \Lip(d)$ has $ \lim_{k \rightarrow \infty} x_{k} = y \in \mathcal{C}(K) $, then for any 
 $\epsilon >0$ and $t\neq s$,
\begin{align*}\frac{\lvert y(t) - y(s) \rvert}{d(t, s)} &\leq \frac{\lvert y(t) - x_j(t) \rvert}{d(t, s)}
+\frac{\lvert x_j(t) - x_j(s) \rvert}{d(t, s)} + \frac{\lvert x_j(s) - y(s) \rvert}{d(t, s)}\\
&\leq 2\epsilon + \frac{\lvert x_j(t) - x_j(s) \rvert}{d(t, s)},
\end{align*}
for $j = j(\epsilon) \gg 0$ independent of $t,s.$  This implies that $y\in \Lip(d).$

\begin{definition} A Radon probability measure $ \mu $ on the Banach space $ (G, \lVert \cdot \rVert) $ has zero mean and finite variance, respectively, if
\begin{equation}\label{conds}
 \int_{G} x\ \mu(dx) = 0, \ 
 \int_{G} \lVert x \rVert^{2} \ \mu(dx) < \infty,
\end{equation}
respectively.
\end{definition}

Of course, if a sequence of $G$-valued random variables $X_i$ has finite expectation, then 
the new random variable $ X_i - \mathbb{E}[X_i] $ has zero mean.  

We can now state Zinn's CLT.

\begin{theorem}\cite{zinn} \label{twoone}  Let $ (K, d) $ be a compact metric space with $d$ CGI. If  $ \mu $ is a Radon probability measure on $ Lip(d) $ with zero mean and finite variance, then $ \mu $ satisfies the Central Limit Theorem on $ (\mathcal{C}(K), \Vert\cdot\Vert_\infty) $ in the sense of Definition \ref{defone}(ii). 
\end{theorem}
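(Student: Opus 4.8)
The plan is to realize the limit as a Gaussian measure built from the covariance of $\mu$, and then to prove weak convergence of the rescaled partial sums in $(\mathcal{C}(K),\|\cdot\|_\infty)$ by the classical two-step route: convergence of finite-dimensional distributions together with asymptotic equicontinuity. First I would introduce, for $s,t\in K$, the kernel
\[
R(s,t)=\int_{\Lip(d)} x(s)\,x(t)\,\mu(dx),
\]
which is finite (since $|x(s)x(t)|\le\|x\|_\infty^2$ and $\mu$ has finite variance) and symmetric positive semidefinite. Writing $\ell(x)=\sup_{s\ne t}|x(s)-x(t)|/d(s,t)$ for the Lipschitz seminorm, the finite-variance hypothesis on $\Lip(d)$ --- read with its Banach norm $\|x\|_\infty+\ell(x)$ --- gives $\int\ell(x)^2\,\mu(dx)=:C^2<\infty$, so that
\[
\int|x(t)-x(s)|^2\,\mu(dx)\le C^2\,d(s,t)^2 .
\]
Hence the centered Gaussian process $\{Z(t)\}_{t\in K}$ with covariance $R$, taken in a separable version, satisfies $\mathbb{E}\,|Z(t)-Z(s)|^2\le C^2 d(s,t)^2$, so $C^{-1}Z$ meets the hypothesis of Definition~\ref{defone}(iii); since $d$ is CGI, $Z$ has a version with almost surely continuous paths, and its law $\gamma$ is a centered Radon Gaussian measure on $\mathcal{C}(K)$. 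This $\gamma$ is the candidate limit.

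Next I would check convergence of finite-dimensional distributions: for fixed $t_1,\dots,t_m\in K$ the vectors $(X_i(t_1),\dots,X_i(t_m))$ are i.i.d., square-integrable, and centered (point evaluations are bounded functionals, so $\int x\,\mu(dx)=0$ passes to them), with covariance $(R(t_j,t_k))_{j,k}$; the ordinary multivariate CLT then gives $n^{-1/2}\sum_{i\le n}(X_i(t_1),\dots,X_i(t_m))\Rightarrow(Z(t_1),\dots,Z(t_m))$. The substance of the argument is the asymptotic equicontinuity estimate
\[
\lim_{\delta\downarrow0}\ \limsup_{n\to\infty}\ \mathbb{E}\Big[\sup_{d(s,t)\le\delta}\ \big|\, n^{-1/2}\textstyle\sum_{i\le n}\bigl(X_i(t)-X_i(s)\bigr)\,\big|\Big]=0 ,
\]
which, combined with the finite-dimensional convergence just obtained, yields via an Arzel\`a--Ascoli / Prohorov argument that $n^{-1/2}\sum_{i\le n}X_i\Rightarrow\gamma$ in $\mathcal{C}(K)$ --- precisely the conclusion asked for in Definition~\ref{defone}(ii). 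To prove the displayed limit I would symmetrize using an independent Rademacher sequence $(\varepsilon_i)$, bounding the left-hand side (up to a factor $2$) by $\mathbb{E}\sup_{d(s,t)\le\delta}|n^{-1/2}\sum_i\varepsilon_i(X_i(t)-X_i(s))|$. Conditionally on $(X_i)$, this is a process with sub-Gaussian increments of variance proxy $n^{-1}\sum_i(X_i(t)-X_i(s))^2\le\bigl(n^{-1}\sum_i\ell(X_i)^2\bigr)d(s,t)^2$, and $n^{-1}\sum_i\ell(X_i)^2\to C^2$ almost surely by the strong law of large numbers. A contraction/comparison inequality replaces the $\varepsilon_i$ by i.i.d.\ Gaussians at the cost of an absolute constant, after which the increment process is dominated, in the Gaussian sense, by one whose increment metric is $\le C d$; the Gaussian-continuity consequence of $d$ being CGI --- equivalently, the uniform modulus-of-continuity estimate for Gaussian processes whose increment metric is controlled by $d$ --- then bounds the oscillation over $d$-balls of radius $\delta$ by a deterministic quantity tending to $0$ as $\delta\downarrow0$, uniformly in large $n$. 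When $\int_0^\infty H_d^{1/2}(K,u)\,du<\infty$ this last estimate is the Dudley--Jain--Marcus chaining bound underlying Proposition~\ref{proptwoone}; for general CGI $d$ one invokes instead the majorizing-measure reformulation of Gaussian continuity.

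The identification of $\gamma$ and the finite-dimensional CLT are routine. The real obstacle is the uniform asymptotic equicontinuity: the hypotheses furnish only a statement about Gaussian processes (through CGI), so the oscillation of the genuinely non-Gaussian partial sums must be routed through symmetrization and a Gaussian comparison before that statement can be used, and one must verify that every comparison --- in particular the replacement of $n^{-1}\sum_i\ell(X_i)^2$ by its almost sure limit and the passage from Rademacher to Gaussian increments --- is uniform in $n$. That is exactly where the finite-variance assumption (which makes $\ell(X)$ square-integrable) and the Lipschitz structure of $\Lip(d)$ enter.
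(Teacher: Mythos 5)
There is nothing to compare against inside the paper: Theorem \ref{twoone} is quoted from Zinn with no proof given, so your write-up has to be measured against the standard argument in the literature (Jain--Marcus for the entropy case, Zinn/Ledoux--Talagrand for general GC metrics), and your outline is essentially that argument and is correct. Three remarks. First, your decision to read ``finite variance'' in (\ref{conds}) with respect to the Lipschitz norm $\Vert x\Vert_\infty+\ell(x)$, so that $\int \ell(x)^2\,d\mu<\infty$, is the right (and necessary) interpretation: $\Lip(d)$ is a Banach space in that norm, not in the sup norm, and without square-integrability of the Lipschitz seminorm the key estimate $\int|x(t)-x(s)|^2\,d\mu\le C^2d^2(s,t)$ --- the only door through which the CGI hypothesis can enter --- is unavailable; since the paper leaves the norm on $\Lip(d)$ ambiguous, you should state this reading explicitly. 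Second, the appeal to the strong law of large numbers is unnecessary and slightly obscures the uniformity in $n$: after Gaussian randomization the conditional process has increment metric at most $C_n\,d$ with $C_n=\bigl(n^{-1}\sum_i\ell(X_i)^2\bigr)^{1/2}$, hence is $C_n$ times a process dominated by $d$, and if $\varphi(\delta)$ denotes the uniform Gaussian modulus for increment metric $\le d$ then the unconditional bound is simply $\mathbb{E}[C_n]\,\varphi(\delta)\le C\varphi(\delta)$ by Cauchy--Schwarz, uniformly in $n$. Third, the one step that carries the real weight is the existence of such a uniform modulus $\varphi(\delta)\to0$ valid for \emph{every} separable centered Gaussian process whose $L^2$-increment metric is dominated by $d$: Definition \ref{defone}(iii) as stated is purely qualitative, so one must either assume the entropy condition (\ref{eq2.1}) and use the Dudley chaining bound, or invoke the majorizing-measure characterization of GC metrics to convert CGI into a quantitative modulus. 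You flag this correctly, but in a complete proof this conversion is not a remark --- it is the substance of Zinn's theorem and would need to be carried out (or cited precisely) rather than gestured at.
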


For our main results, we need to define the  Fr\'echet mean
with respect to a probability measure $Q$ on $(K,d)$.  This generalizes 
the notion of centroids from vector spaces to metric spaces.

\begin{definition} (i) The Fr\'echet function $ f:K\to \R$ is
\begin{equation*}
f(p) = \int_{M} d^{2}(p, z) Q(z) dz, p \in M.
\end{equation*}
If $ f(p) $ has a unique minimizer $ \mu = {\rm argmin}_{p \in K} f(p) $, we call $\mu$ the Fr\'echet mean of $ Q $. 

(ii) 
Given an i.i.d. sequence $ X_{1}, \dotso, X_{n} \sim Q $ on $ M $, the empirical Fr\'echet mean is defined to be
\begin{equation*}
\mu_{n} = {\rm argmin}_{p \in M} \frac{1}{n} \sum_{i=1}^{n} d^{2}(p, X_{i}),
\end{equation*}
provided the argmin is unique.\end{definition}

%\begin{remark} We can consider the Fr\'echet function as the approximation of the empirical Fr\'echet function by replacing the discrete measure $ \frac{1}{n} \sum_{i=1}^{n} \delta_{X_{i}} $ by $ Q $ on $ M $.\footnote{Frank will rewrite this.}
%\end{remark}

Unlike centroids in Euclidean space, the uniqueness of Fr\'echet mean cannot be guaranteed, even in 
spaces which locally look like Euclidean space.

\begin{example} 
We parametrize an open cone (minus a line) $\calZ: x^2+y^2 = z^2$ of height one by
$$F(u,v) = \left(\frac{1}{\sqrt{2}} u\cos v, \frac{1}{\sqrt{2}} u\sin v, \frac{1}{\sqrt{2}} u\right),\  (u,v)\in (0,1)\times
(0,2\pi).$$
There is a Riemannian  isometry from the sector $S = \{(r,\theta)\in (0, \sqrt{2})\times (0, \sqrt{2}\pi)\}$ to $\calZ$ induced by
$\alpha: (r,\theta)\mapsto (u = r/\sqrt{2}, v = \sqrt{2}\theta)$, {\it i.e.,} 
$$(r,\theta)\mapsto 
 \left(\frac{r}{2}\cos( \sqrt{2}\theta), \frac{r}{2}\sin( \sqrt{2}\theta),\frac{r}{2}\right).$$
 Indeed, the first fundamental form of the sector at $(r,\theta)$, respectively the first fundamental form of the cone at $(u,v)$, are
 $$\begin{pmatrix}1&0\\0&r^2\end{pmatrix},\ \ \begin{pmatrix}2&0\\0&\frac{u^2}{2}\end{pmatrix},$$
 respectively.  It is easy to check that the differential $d\alpha$ preserves these inner products.  Thus every point $p\in S$ has a neighborhood $U$ such that the usual Euclidean distance between 
 $q_1, q_2\in U$  equals the geodesic distance between $\alpha(q_1), \alpha(q_2).$

It is easy to check that for e.g. the uniform distribution on $S$, the Fr\'echet mean/cen-troid $(\bar x, \bar y)$  is inside $S$.  In contrast, by the rotational symmetry of the geodesic distance function on $\calZ$, the minima of the Fr\'echet function on $\calZ$ form a circle containing $\alpha(\bar x, \bar y).$

\end{example}

For results on CLTs when the Fr\'echet mean is not unique, see \cite{Bhattacharya}.

% This is the beginning of Section 3.
\section{A CLT for  Compact Metric Spaces}

 In this section, we isometrically embed the compact metric $( K,d) $ into the Banach space
 $(\calC(K),d_\infty)$  to obtain a CLT on the image of $ K $.
 
 We define 
$$
\imath_{d} : K \rightarrow \mathcal{C}(K),  x \mapsto f_{x} : = d(x, \cdot).$$

The following proposition is well known.
\begin{proposition} $ \imath_{d} : (K, d) \rightarrow (\imath_{d}(K), d_{\infty}) $ is an isometry.  
\end{proposition}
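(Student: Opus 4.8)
The plan is to verify directly that $\imath_d$ is distance-preserving, which automatically makes it a bijection onto its image and hence an isometry. First I would check that $\imath_d$ is even well-defined, i.e.\ that $f_x = d(x,\cdot)$ actually lies in $\mathcal{C}(K)$: by the triangle inequality $|d(x,z) - d(x,z')| \le d(z,z')$, so $f_x$ is $1$-Lipschitz, in particular continuous, and it is bounded since $(K,d)$ is compact. Thus $\imath_d$ maps $K$ into $\mathcal{C}(K)$ as claimed (indeed into $\Lip(d)$).

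The core computation is to evaluate, for arbitrary $x,y\in K$,
\begin{equation*}
d_\infty(f_x,f_y) = \lVert f_x - f_y\rVert_\infty = \sup_{z\in K} \lvert d(x,z) - d(y,z)\rvert .
\end{equation*}
For the upper bound, the triangle inequality in $(K,d)$ gives $\lvert d(x,z) - d(y,z)\rvert \le d(x,y)$ for every $z\in K$, hence $d_\infty(f_x,f_y) \le d(x,y)$. For the matching lower bound, I would simply plug in the test point $z = y$ (or $z=x$): then $\lvert d(x,y) - d(y,y)\rvert = d(x,y)$, so the supremum is at least $d(x,y)$. Combining the two inequalities yields $d_\infty(f_x,f_y) = d(x,y)$.

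Finally, a distance-preserving map is automatically injective, so $\imath_d\colon K \to \imath_d(K)$ is a bijection preserving distances, i.e.\ an isometry onto its image; compactness of $K$ further ensures $\imath_d(K)$ is a closed (compact) subset of $\mathcal{C}(K)$. I do not anticipate any real obstacle here: the only point worth stating carefully is that the supremum defining $\lVert f_x-f_y\rVert_\infty$ is actually attained at $z=y$, so the reverse triangle inequality gives an equality rather than merely a bound.
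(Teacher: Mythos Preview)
Your proof is correct and is essentially identical to the paper's: both establish $d_\infty(f_x,f_y)\le d(x,y)$ from the triangle inequality and the reverse inequality by evaluating at $z=y$. The extra remarks you add (well-definedness of $f_x$, injectivity, compactness of the image) are valid but not part of the paper's proof of this proposition.
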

\begin{proof} For  $ x, y \in K $, we have
\begin{equation*}
d_{\infty}(f_{x}, f_{y}) = \lVert f_{x} - f_{y} \rVert_{\infty} = \max_{z \in K} \lvert d(x, z) - d(y, z) \rvert \geqslant \lvert d(x, y) - d(y, y) \rvert = d(x, y).
\end{equation*}
\noindent On the other hand, for  $ x, y,z \in K $, we have
$$
 d(x, z) - d(y, z) \leqslant d(x, y), d(y, z) - d(x, z) \leqslant d(x, y) \Rightarrow \lvert d(x, z) - d(y, z) \rvert \leqslant d(x, y),$$
 so
 $$
 \max_{z \in K} \lvert d(x, z) - d(y, z) \rvert \leqslant d(x, y) \Rightarrow d_{\infty}(f_{x}, f_{y}) \leqslant d(x, y).$$
Thus $\imath_d$ is an isometry.
\end{proof}

It follows that $\imath_d$ is an injection, and $\imath_d(K)$ is a compact subset of $\calC(K)$.

To obtain a CLT on $ \imath_{d}(K) $ from Theorem~\ref{twoone}, we need to verify its hypotheses.

\begin{lemma} \label{lem:one}%For a compact metric space $ (K, d) $, 
$ \imath_{d}(K) \subset \Lip(d) $.
\end{lemma}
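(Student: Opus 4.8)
The plan is to show that each embedded point $f_x = \imath_d(x) = d(x,\cdot)$ is a Lipschitz function on $(K,d)$ with Lipschitz constant at most $1$, which certainly places it in $\Lip(d)$. Concretely, I would fix $x \in K$ and estimate, for $t \neq s$ in $K$,
\begin{equation*}
\frac{\lvert f_x(t) - f_x(s)\rvert}{d(t,s)} = \frac{\lvert d(x,t) - d(x,s)\rvert}{d(t,s)}.
\end{equation*}
The numerator is controlled by the triangle inequality exactly as in the proof that $\imath_d$ is an isometry: from $d(x,t) \leq d(x,s) + d(s,t)$ and $d(x,s) \leq d(x,t) + d(t,s)$ one gets $\lvert d(x,t) - d(x,s)\rvert \leq d(t,s)$. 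Hence the quotient is bounded by $1$, so $\sup_{t\neq s} \lvert f_x(t) - f_x(s)\rvert/d(t,s) \leq 1 < \infty$, which is precisely the defining condition for membership in $\Lip(d)$.

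I would then simply note that since $x$ was arbitrary, every element of $\imath_d(K)$ lies in $\Lip(d)$, giving the desired inclusion $\imath_d(K) \subset \Lip(d)$. It is worth remarking that this also shows the inclusion is ``uniform'': every embedded point has Lipschitz seminorm at most $1$, so $\imath_d(K)$ sits inside the unit ball of the Lipschitz seminorm, a fact that may be convenient later when checking the finite-variance hypothesis of Theorem~\ref{twoone}.

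There is essentially no obstacle here; the only thing to be slightly careful about is that $\Lip(d)$ was defined as a subset of $\calC(K)$, so one should observe first that $f_x = d(x,\cdot)$ is genuinely continuous on $(K,d)$ — but this is immediate from the same $1$-Lipschitz bound, which gives (uniform) continuity directly. So the proof is a one-line triangle-inequality estimate together with the bookkeeping that $x$ ranges over all of $K$.
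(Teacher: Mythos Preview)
Your proof is correct and follows exactly the same approach as the paper: both use the triangle inequality to show $\lvert f_x(t) - f_x(s)\rvert \leq d(t,s)$, giving a Lipschitz seminorm bound of $1$ for every $f_x \in \imath_d(K)$. Your additional remarks about continuity and the uniform Lipschitz bound are sound but not needed for the lemma as stated.
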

\begin{proof} For $ f_{x} \in \imath_{d}(K) $,  the triangle inequality for $s,t\in K$ gives
\begin{equation*}
\lvert f_{x}(t) - f_{x}(s) \rvert = \lvert d(x, t) - d(x, s) \vert \leqslant d(s, t).
\end{equation*}
\noindent It follows that
\begin{equation*}
%q(f_{x}) = 
\sup_{s \neq t} \frac{\lvert f_{x}(s) - f_{x}(t) \rvert}{d(s, t)} \leqslant  1.
\end{equation*}

\end{proof}

In the following proof, we strongly use the fact that $\calC(K)$ is a ``linearization" of $K$.

\begin{lemma} \label{lem:two} The metric  $ d $ on $K$ is GCI. 
\end{lemma}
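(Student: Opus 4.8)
The plan is to verify the sufficient condition for $d$ to be GCI given by Proposition~\ref{proptwoone}, namely Dudley's entropy integral bound \eqref{eq2.1}. The key point is that the embedding $\imath_d$ realizes $(K,d)$ isometrically as a bounded subset of the Banach space $\mathcal{C}(K)$, and a bounded subset of a Banach space has finite metric entropy at every scale, with a quantitative control that decays fast enough near $u=0$. More precisely, since $\imath_d(K)$ is a compact (hence bounded) subset of $\mathcal{C}(K)$, there is a constant $R>0$ with $\mathrm{diam}_{d_\infty}(\imath_d(K))\le 2R$, so $N_d(K,u)=1$ for all $u\ge R$ and the integral in \eqref{eq2.1} is really over $(0,R)$.

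The first step is therefore to obtain a covering-number estimate for $\imath_d(K)$ inside $\mathcal{C}(K)$. Here I would use that every $f_x=d(x,\cdot)\in\imath_d(K)$ is $1$-Lipschitz on $(K,d)$ (Lemma~\ref{lem:one}) and uniformly bounded by $R$. A standard argument bounds the $\epsilon$-covering number of a family of uniformly bounded, uniformly Lipschitz functions on a totally bounded metric space: pick an $\epsilon$-net $\{t_1,\dots,t_m\}$ of $(K,d)$ with $m=N_d(K,\epsilon)$, and discretize the values in $[-R,R]$ into $O(R/\epsilon)$ levels; a $1$-Lipschitz function is determined up to sup-error $O(\epsilon)$ by its values at the net points, so $N_d(K,C\epsilon)\le (CR/\epsilon)^{N_d(K,\epsilon)}$ for a universal constant $C$. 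Taking logarithms gives the entropy bound
\[
H_d(K,C\epsilon)\ \le\ N_d(K,\epsilon)\,\log\!\Big(\frac{CR}{\epsilon}\Big).
\]

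The second step is to feed this into the entropy integral. The issue is that the right-hand side involves $N_d(K,\epsilon)$ itself, which can blow up as $\epsilon\to 0$, so a naive bound is not obviously integrable. The clean way around this is to invoke compactness of $\imath_d(K)$ in $\mathcal{C}(K)$ directly: a compact subset of a Banach space is totally bounded, and in fact the Arzel\`a--Ascoli characterization of compactness in $\mathcal{C}(K)$ shows $\imath_d(K)$ is equicontinuous, which is exactly what forces $H_d(K,u)$ to grow slowly enough. I would argue that $\int_0^R H_d^{1/2}(K,u)\,du<\infty$ by splitting $(0,R)$ into dyadic blocks and using equicontinuity (the modulus of continuity of the family $\imath_d(K)$) to control $N_d(K,u)$; alternatively, one cites the known fact (e.g. from Dudley's work, or the general theory of Gaussian processes indexed by compact subsets of $\mathcal{C}(K)$) that for a compact subset of $\mathcal{C}(K)$ the Dudley integral is automatically finite. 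Then Proposition~\ref{proptwoone} applies and $d$ is GCI.

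The main obstacle is the second step: ensuring the entropy integral \eqref{eq2.1} actually converges rather than just showing each $H_d(K,u)$ is finite. The resolution hinges on using the \emph{equicontinuity} of $\imath_d(K)$ inside $\mathcal{C}(K)$ — i.e.\ that all the functions $f_x$ share the common modulus of continuity coming from the single metric $d$ — to get a $u$-uniform handle on covering numbers, rather than treating $K$ and the function space separately. This is precisely the ``linearization'' phenomenon flagged in the remark preceding the lemma: working inside $\mathcal{C}(K)$ converts the geometric hypothesis on $d$ into a compactness statement that delivers Dudley's condition for free.
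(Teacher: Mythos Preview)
Your proposal has a genuine gap at the second step. The Kolmogorov--Tikhomirov bound $H_d(K,C\epsilon)\le N_d(K,\epsilon)\log(CR/\epsilon)$ is correct but, as you yourself note, circular: feeding the unknown covering number back into itself yields nothing. Neither of your two escapes works. Equicontinuity of $\{f_x\}$ (equivalently, the uniform $1$-Lipschitz bound from Lemma~\ref{lem:one}) only gives $N_d(K,\epsilon)<\infty$ for each fixed $\epsilon$; it carries no rate and cannot by itself force $\int_0^1\sqrt{H_d(K,u)}\,du<\infty$. And the ``known fact'' you invoke is false: every compact metric space embeds isometrically into some $\mathcal{C}(K)$ via the Kuratowski map $\imath_d$ itself, so ``compact subset of $\mathcal{C}(K)\Rightarrow$ finite Dudley integral'' would say that \emph{every} compact metric space satisfies Dudley's condition. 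It does not: take $K=\{0\}\cup\{(\log\log n)^{-1}e_n:n\ge 3\}\subset\ell^2$, which is compact with $H_d(K,\epsilon)$ of order $e^{1/\epsilon}$, and $\int_0^1 e^{1/(2\epsilon)}\,d\epsilon=\infty$.

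The paper's argument is different and does not pass through Arzel\`a--Ascoli at all. It exploits the \emph{affine} structure of $(\mathcal{C}(K),d_\infty)$: translation invariance is used to make the number $M$ of half-radius balls needed to cover a unit ball independent of its center, and homogeneity $d_\infty(cf,cg)=|c|\,d_\infty(f,g)$ is used to make it independent of the radius. Iterating gives a doubling-type estimate $N_{d_\infty}(\imath_d(K),2^{-k})\le N\cdot M^{k+1}$, hence $H_d(K,2^{-k})=O(k)$, and the dyadic sum $\sum_k\sqrt{k}\,2^{-k}<\infty$ finishes the Dudley integral. That uniform-in-scale doubling bound is precisely the missing idea in your outline; it is what the paper means by the ``linearization'' remark, and equicontinuity alone does not supply it.
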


\begin{proof} We must verify (\ref{eq2.1})
in Proposition~\ref{proptwoone}.  Equivalently, we will show 
\begin{equation}\label{eq3.2} \int_{0}^{\infty} H_{d_{\infty}}^{\frac{1}{2}}(\imath_{d}(K), u) du < \infty.
\end{equation} 

As a compact set, $ \imath_{d}(K) $ can be covered by $ N $ balls of radius $ 1 $ for some $ N $.
Fix any point $ x_{0} \in \imath_{d}(K) $, and consider the $ d_{\infty} $-ball $ B_\infty(x_{0}, 1) $ of radius $ 1 $ centered at $ x_{0} $. 
The closure $\overline{B_\infty(x_0,1)}$ equals 
$\imath_d(\overline{B_d(\imath^{-1}_d(x_0),1) }  )$ of the corresponding ball in $K.$
It follows that $\overline{B_\infty(x_0,1)}$ is compact, so
 we can cover $ B(x_{0}, 1) $ by $ M $ balls of radius $ \frac{1}{2} $ for some $ M $. 
 
Since 
 $ d_{\infty} $ is translation invariant, $ M $ is independent of the choice of $ x_{0} $. 
Moreover, $d_\infty$ is homogeneous in the sense that $ d_{\infty}(cf, cg) = \lvert c \rvert d_{\infty}(f, g) $ for  $ c \in \mathbb{R} $. Thus for $ r >0 $, any $ d_\infty $-ball of radius $ r $ contained in $ \imath_{d}(K) $ can be covered by $ M $ balls of radius $ \frac{r}{2} $. Hence
\begin{equation*}
N_{d_{\infty}}(\imath_{d}(K), 2^{-k}) \leqslant N \cdot M^{k + 1}.
\end{equation*}

 To estimate (\ref{eq2.1}), we  integrate over $ [0,1] $ and $ [1, \infty) $ separately.
Since $ \imath_{d}(K) $ is compact, it is covered by a single $ d_\infty $-ball $ B_\infty(x_{0}, R) $ for some $ R \gg 0 $ and a fixed $ x_{0} \in \imath_{d}(K) $. Choose $ k_{0} \in \mathbb{N} $ such that $ 2^{k_{0}} \leqslant R < 2^{k_{0} + 1} $. We have
\begin{align*}
\int_{1}^{\infty} H_{d_{\infty}}^{\frac{1}{2}} (\imath_{d}(K), u) du & = \int_{1}^{\infty} \sqrt{\log (N_{d_{\infty}}(\imath_{d}(K), u))} du \\
&\leqslant \sum_{k=1}^{\infty} \sqrt{\log (N_{d_{\infty}}(\imath_{d}(K), 2^{k}))} (2^{k} - 2^{k - 1}) \\
& \leqslant \sum_{k = 1}^{k_{0} + 1} \sqrt{\log (N_{d_{\infty}}(\imath_{d}(K), 2^{k}))} 2^{k - 1} < \infty.
\end{align*}
\noindent For the region $ [0, 1] $, we have
\begin{align*}
\lefteqn{
\int_{0}^{1} H_{d_{\infty}}^{\frac{1}{2}} (\imath_{d}(K), u) du}\\ 
& = \int_{0}^{1} \sqrt{\log (N_{d_{\infty}}(\imath_{d}(K), u))} du
 \leqslant \sum_{k=0}^{\infty} \sqrt{\log (N_{d_{\infty}}(\imath_{d}(K), 2^{-k-1}))} (2^{-k} - 2^{-k - 1}) \\
&\leqslant \sum_{k = 0}^{\infty} \sqrt{ \log (N \cdot M^{k + 2})} \ 2^{-k - 1}
 = \sum_{k = 0}^{\infty} \sqrt{ (k + 2) \log M + \log N} \ 2^{ - k - 1} 
 < \infty.
\end{align*}
Adding these estimates gives (\ref{eq3.2}).
\end{proof}

This gives our first Central Limit Theorem on $K $, or really on the isometric space $\isd(K).$

\begin{theorem} \label{thm:3.1}Let $ (K, d) $ be a compact metric space, let $ \mu $ be a Radon probability measure on $K$ with finite variance and such that $\imath_{d,*}\mu$ has zero mean on
$\isd(K)$. Then $\imath_{d,*}\mu$ satisfies the $G$-CLT for $G = (\calC(K), \Vert\cdot\Vert_\infty).$
%and let  $ \lbrace X_{i} \rbrace $  be a
% i.i.d.~sequence of $ K $-valued random variables.  For $G = \calC(K),$ the 
 %$\isd(K)$-valued random variables $\{\isd\circ X_i\}$ satisfy the
%$G$-CLT with respect to $ \mu $.
\end{theorem}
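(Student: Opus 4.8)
The plan is to obtain the statement as an immediate application of Zinn's Central Limit Theorem (Theorem~\ref{twoone}) to the pushforward probability measure $\nu := \imath_{d,*}\mu$ on the Banach space $G = (\calC(K),\Vert\cdot\Vert_\infty)$. First I would record that $\nu$ is a Radon probability measure: $K$ compact forces $\calC(K)$ to be a separable Banach space, hence a Polish space, so every finite Borel measure on it is Radon; and $\nu$ is the pushforward of the finite Borel measure $\mu$ under the continuous (indeed isometric) map $\imath_d$. Since $\imath_d$ is an isometric embedding, it is a homeomorphism onto the compact set $\isd(K)$, and $\nu$ is carried by $\isd(K)$.

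Next I would verify the remaining hypotheses of Theorem~\ref{twoone}. The metric $d$ is GCI by Lemma~\ref{lem:two}. The measure $\nu$ is carried by $\Lip(d)$: this follows from Lemma~\ref{lem:one}, since $\nu(\isd(K)) = 1$ and $\isd(K)\subset\Lip(d)$. The zero-mean condition $\int_G x\,\nu(dx)=0$ is precisely the hypothesis that $\imath_{d,*}\mu$ has zero mean on $\isd(K)$. Finally, for finite variance I would change variables,
\begin{equation*}
\int_G \Vert x\Vert_\infty^2\,\nu(dx) = \int_K \Vert f_x\Vert_\infty^2\,\mu(dx),
\end{equation*}
and use $\Vert f_x\Vert_\infty = \max_{z\in K} d(x,z)\leq {\rm diam}(K)$, which is finite by compactness of $K$; thus the integral is at most $({\rm diam}\, K)^2 < \infty$. (In particular this makes the "finite variance of $\mu$" hypothesis automatic.)

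With all hypotheses of Theorem~\ref{twoone} verified, it yields that $\nu = \imath_{d,*}\mu$ satisfies the $G$-CLT on $(\calC(K),\Vert\cdot\Vert_\infty)$ in the sense of Definition~\ref{defone}(ii), which is exactly the assertion.

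I do not expect a genuine obstacle here: the substantive work is already done in Lemmas~\ref{lem:one} and~\ref{lem:two} (which rest on Dudley's entropy criterion, Proposition~\ref{proptwoone}). The only points needing a little care are bookkeeping ones — that the pushforward under $\imath_d$ is again a Radon measure and is supported on $\isd(K)\subseteq\Lip(d)$, and that "finite variance" transfers across the change of variables — all of which are straightforward given separability of $\calC(K)$ and the fact that $\imath_d$ is an isometry with compact image.
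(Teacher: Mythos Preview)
Your proposal is correct and follows exactly the paper's approach: apply Zinn's Theorem~\ref{twoone} by invoking Lemmas~\ref{lem:one} and~\ref{lem:two} to verify its hypotheses for $\imath_{d,*}\mu$. You have simply spelled out the routine bookkeeping (Radon-ness of the pushforward, support in $\Lip(d)$, transfer of the variance condition) that the paper leaves implicit.
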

\begin{proof} By Lemmas \ref{lem:one}, \ref{lem:two}, the hypotheses of Theorem 2.1 are satisfied
for $\imath_{d,*}\mu$.
\end{proof}

% This is the beginning of Section 4.

\section{A  Fr\'echet CLT associated to a compact metric space}
In the previous section, we found a  G-CLT on Banach space associated with the usual sample mean $ \sqrt{n} \cdot \frac{1}{n} \sum_{i=1}^{n} X_{i} $ on $G = \calC(K)$. In this section,  we prove a G-CLT on the compact metric space $K$ (again, really on $\isd(K)$), endowed with a Radon probability measure $\eta,$  for the sample Fr\'echet mean
\begin{equation}\label{sfmean}
\text{argmin}_{Y \in \imath_{d}(K)} \frac{1}{n} \sum_{i=1}^{n} \lVert X_{i} - Y \rVert_{2,\eta}^{2},
\end{equation}
 $ X_{1}, \dotso, X_{n} $ are i.i.d.  $ \imath_{d}(K) $-valued random variables, and the $L^2$ norm is taken with respect to $\imath_{d,*}\eta.$
 % (We usually take $\mu = \imath_{d,*}Q$ for a probability measure $Q$ on $K$.)

Note that we compute the sample Fr\'echet mean with respect to the $ L^{2}$-norm, since we will need a Hilbert space structure below. 
The minimizer of (\ref{sfmean}) may not exist or be unique, since $\imath_{d}(K) $ may be neither closed nor  in
 $ \mathcal{C}(K) $. Instead, we consider the closed convex hull of $ \imath_{d}(K) $,
 on which the uniqueness of the Fr\'echet mean is guaranteed.
 
\begin{definition} \label{cch} Let $ \imath_{d}(K)^{c} $ be the  convex hull of $ \imath_{d}(K) $, {\it i.e.}, the
intersection of all convex subsets of $ \mathcal{C}(K) $ containing $ \imath_{d}(K) $, and let
\begin{equation*}
S_{d} = S_d(K) = \overline{  \imath_{d}(K)^{c}}
\end{equation*}
\noindent be the closure of $ \imath_{d}(K)^c $.
\end{definition}
By  \cite[Thm.~5.35]{border}, $ S_{d} $ is a compact subset of $ \mathcal{C}(K) $.  
It is easy to check that $S_d(K) \subset \Lip(d).$
As the minimizer of a convex function on a closed convex space, the Fr\'echet mean is unique.

\begin{example} To continue with  Example 2.1,  choose a probability measure $Q$ on the cone $\calZ$. The sample Fr\'echet mean for $K$-valued random variables $Y_i$ lies in the interior of $\calZ$ in $\R^3.$  It is unclear if the sample Fr\'echet mean for the $\imath_d(\calZ)$-valued random variables $X_i = \isd\circ Y_i$ lies in
$\isd(\calZ)$, but it certainly lies in $S_d(\calZ).$  (This example doesn't really show the strength of embedding $\calZ$ into $\calC(\calZ)$, since $\calZ$ lies in a linear space.

Similar remarks apply to the Fr\'echet minimum. While the Fr\'echet minimum for the cone $(\calZ,\eta)$ is not unique, the Fr\'echet minimum on 
$(S_d(\calZ), \imath_{d,*}\eta)$, the closed convex hull of the isometric set $(\imath_d(\calZ), \imath_{d,*}\eta)$, is unique.  (Note that the sector $S$ in Example 2.1 is only locally isometric to 
$\calZ.$) While we have gained uniqueness, there is no reason why the Fr\'echet minimum need be inside $\imath_d(\calZ)$, as in Example 2.1.  It is shown in \cite[Supplement C]{eric} that in general the distance from the Fr\'echet mean in $S_d(K)$ to $\imath_d(K)$ is at most twice the diameter of $K$.
%\footnote{Here I've added a discussion of both the Fr\'echet mean and  the sample Fr\'echet mean.}

\end{example}

At this point we have the embeddings $K\hookrightarrow \isd(K) \subset S_{d}(K) \subset \mathcal{C}(K) \subset L^{2}(K)$, where $L^2(K)$ is taken with respect to a probability measure on $K$.  While 
$K\hookrightarrow L^2(K)$ is no longer an isometry, there is a known CLT on $S_d(K)$ equipped with the $L^2$ norm.

\begin{theorem}\label{thm:4.1} %Let $\{X_i\}$ be i.i.d. $ S_{d} $-valued random variables with distribution  $\mu$  a
% $ \lbrace X_{i} \rbrace $ where $ S_{d} $ is equipped with $ \lVert \cdot \rVert_{2} $ norm. It follows that every
Let $\mu$ be a 
  Radon probability measure  supported in $ K $ such that $\imath_{d,*}\mu$ satisfies
 (\ref{conds}).  Then 
$\imath_{d,*}\mu$ satisfies the $G$-CLT for $G = (L^2(K), \Vert\cdot\Vert_{2,\eta})$. 
The same result holds if the random variables 
 $\{X_i\}$ in the G-CLT are $\isd(K)$-valued and/or $\mu$ has support in $S_d(K).$
 \end{theorem}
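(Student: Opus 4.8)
The plan is to reduce the $L^2$-CLT to the $L^\infty$-CLT already established in Theorem~\ref{thm:3.1}, using the fact that the sup norm dominates the $L^2(K,\eta)$ norm when $\eta$ is a probability measure. First I would record the elementary inequality
\[
\Vert g\Vert_{2,\eta}^2 = \int_K |g|^2\, d\eta \leqslant \Vert g\Vert_\infty^2 \cdot \eta(K) = \Vert g\Vert_\infty^2,
\qquad g \in \calC(K),
\]
so that the identity map $(\calC(K),\Vert\cdot\Vert_\infty) \to (L^2(K,\eta),\Vert\cdot\Vert_{2,\eta})$ is a continuous (indeed $1$-Lipschitz) linear map, and in particular is Borel measurable. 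Because $\imath_{d,*}\mu$ is a Radon measure on $\calC(K)$ supported on the compact set $\imath_d(K)$ (or on $S_d(K)$), its pushforward under this inclusion is again Radon, supported on the image of that compact set, which is compact in $L^2(K,\eta)$. The hypothesis (\ref{conds}) — zero mean and finite variance — transfers verbatim, since a continuous linear map commutes with the Bochner integral (so the mean stays zero) and decreases norms (so the second moment stays finite).

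Next I would push the CLT through the linear map. Theorem~\ref{thm:3.1} gives a Gaussian Radon measure $\gamma$ on $\calC(K)$ such that the laws $\mu_n$ of $n^{-1/2}\sum_{i=1}^n X_i$ converge weakly to $\gamma$. Weak convergence is preserved by pushforward under any continuous map, so the laws of $n^{-1/2}\sum_{i=1}^n X_i$ viewed in $L^2(K,\eta)$ converge weakly to the pushforward $\tilde\gamma$ of $\gamma$. It remains only to check that $\tilde\gamma$ is a Gaussian Radon measure on $L^2(K,\eta)$ in the sense of Definition~\ref{defone}(i): given a nontrivial bounded linear functional $L$ on $L^2(K,\eta)$, its composition with the inclusion is a bounded linear functional on $\calC(K)$, which is nontrivial provided $L$ does not annihilate the (dense-in-its-closed-span) image; since $\gamma$ is Gaussian Radon on $\calC(K)$, the pushforward of $\gamma$ under any such functional is a (possibly degenerate) Gaussian on $\R$, hence $L_*\tilde\gamma$ is Gaussian. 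The non-degeneracy clause requires a small argument — one restricts attention to the closed linear span of the support, where the covariance operator is genuinely non-degenerate, exactly as in the $\calC(K)$ case — or one simply observes that Zinn's construction produces a covariance that is inherited intact under the norm-reducing inclusion.

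For the final sentence of the statement, the same argument applies with no change: if the $X_i$ are $\isd(K)$-valued, their laws already live on $\isd(K) \subset S_d(K) \subset \calC(K)$, which is exactly the setting of Theorem~\ref{thm:3.1}; and if $\mu$ is allowed support in $S_d(K)$, then since $S_d(K)$ is a compact subset of $\calC(K)$ contained in $\Lip(d)$ (as noted after Definition~\ref{cch}), Zinn's Theorem~\ref{twoone} applies directly to $\imath_{d,*}\mu$ on $(\calC(K),\Vert\cdot\Vert_\infty)$, giving the $L^\infty$-CLT, and then the inclusion into $L^2(K,\eta)$ transfers it as above.

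The one genuine subtlety — and the step I expect to be the main obstacle — is verifying non-degeneracy of the limiting Gaussian $L_*\tilde\gamma$ for functionals $L$ on $L^2(K,\eta)$. A functional on $L^2$ could in principle vanish on the embedded copy of $K$ without vanishing on all of $L^2(K,\eta)$, so Definition~\ref{defone}(i) must be read as referring to functionals nontrivial on the relevant subspace (the closed span of the support of $\tilde\gamma$), which is the same caveat already implicit in Theorem~\ref{thm:3.1}; making this precise, rather than any analytic difficulty, is where care is needed.
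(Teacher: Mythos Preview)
Your approach is correct but takes a genuinely different route from the paper. The paper's proof is two lines: it cites that every Hilbert space is of type~2 and cotype~2 (Ledoux--Talagrand), and then invokes a general CLT valid on any type~2/cotype~2 Banach space. No reference to Theorem~\ref{thm:3.1} or to the embedding $\calC(K)\hookrightarrow L^2(K,\eta)$ is made; the $L^2$ result is obtained independently of the $L^\infty$ one.

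Your reduction via the $1$-Lipschitz inclusion $(\calC(K),\Vert\cdot\Vert_\infty)\to (L^2(K,\eta),\Vert\cdot\Vert_{2,\eta})$ is a legitimate alternative: weak convergence pushes forward under continuous maps, and a continuous linear image of a Gaussian Radon measure is again Gaussian (with Radon-ness automatic since $L^2(K,\eta)$ is Polish). The non-degeneracy issue you flag is real but not specific to your argument --- it is already latent in Definition~\ref{defone}(i) and in the statement of Theorem~\ref{thm:3.1} itself, since the limiting Gaussian in any Banach-space CLT is degenerate along directions orthogonal to the covariance; the paper's own type~2 argument faces exactly the same caveat. In fact your concern about $L\circ\iota$ being trivial dissolves once you note that $\calC(K)$ is dense in $L^2(K,\eta)$ for a Radon probability measure $\eta$, so a nonzero $L$ on $L^2$ restricts to a nonzero functional on $\calC(K)$. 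The trade-off: the paper's proof is shorter and invokes a single black box, while yours is more self-contained (no type/cotype machinery beyond what already underlies Zinn's theorem) and makes explicit that the $L^2$-CLT is a direct consequence of the $L^\infty$-CLT.
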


\begin{proof} By  \cite[Thm.~9.10]{ledoux}, the Hilbert space $L^2(K)$ is of type 2 and cotype 2. The existence
of a CLT on spaces of type/cotype 2 follows from \cite[Thm.~3.5]{Jorgensen}.
\end{proof}

We also obtain a  CLT for the sample Fr\'echet  mean. Here the Hilbert space structure works to our
advantage, as the sample Fr\'echet mean and the usual sample mean coincide..

\begin{proposition} \label{prop:4.1}For $S_d(K)$-valued random variables $\{X_i\}$, we have
\begin{equation*}
S_{n} := \frac{1}{n} \sum_{i=1}^{n} X_{i} = {\rm argmin}_{Y \in S_{d}} \frac{1}{n} \sum_{i=1}^{n} \lVert X_{i} - Y \rVert_{2,\eta}^{2}.
\end{equation*}

\end{proposition}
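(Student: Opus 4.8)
The plan is to exploit the Hilbert space structure of $L^{2}(K,\eta)$, in which the $L^{2}$-Fr\'echet mean of finitely many points always coincides with their arithmetic mean. Write $\langle\cdot,\cdot\rangle$ for the inner product on $L^{2}(K,\eta)$ inducing $\lVert\cdot\rVert_{2,\eta}$.

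First I would check that $S_{n}$ is an admissible competitor, i.e.\ $S_{n}\in S_{d}$. Since each $X_{i}$ takes values in $S_{d}$, and $S_{d}$ is convex (it is by definition the closed convex hull of $\imath_{d}(K)$), the convex combination $S_{n}=\frac{1}{n}\sum_{i=1}^{n}X_{i}$ lies in $S_{d}$. Moreover $S_{d}\subset\mathcal{C}(K)\subset L^{2}(K,\eta)$, so all the norms below are finite.

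The key step is an inner-product expansion. For any $Y\in S_{d}$,
\begin{equation*}
\lVert X_{i}-Y\rVert_{2,\eta}^{2}
=\bigl\lVert (X_{i}-S_{n})+(S_{n}-Y)\bigr\rVert_{2,\eta}^{2}
=\lVert X_{i}-S_{n}\rVert_{2,\eta}^{2}+2\langle X_{i}-S_{n},\,S_{n}-Y\rangle+\lVert S_{n}-Y\rVert_{2,\eta}^{2}.
\end{equation*}
Averaging over $i=1,\dotso,n$ and using $\sum_{i=1}^{n}(X_{i}-S_{n})=0$, which kills the cross term, gives
\begin{equation*}
\frac{1}{n}\sum_{i=1}^{n}\lVert X_{i}-Y\rVert_{2,\eta}^{2}
=\frac{1}{n}\sum_{i=1}^{n}\lVert X_{i}-S_{n}\rVert_{2,\eta}^{2}+\lVert S_{n}-Y\rVert_{2,\eta}^{2}.
\end{equation*}
The first term on the right is independent of $Y$, so the right-hand side is minimized exactly when $\lVert S_{n}-Y\rVert_{2,\eta}=0$, i.e.\ $Y=S_{n}$ as an element of $L^{2}(K,\eta)$, and this minimizer is unique. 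Hence $S_{n}=\mathrm{argmin}_{Y\in S_{d}}\frac{1}{n}\sum_{i=1}^{n}\lVert X_{i}-Y\rVert_{2,\eta}^{2}$.

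I do not anticipate a real obstacle here: the only points deserving a word of care are that $S_{n}$ genuinely lies in $S_{d}$ (convexity of the closed convex hull) and that we are working in a Hilbert space rather than merely a Banach space, so that the inner-product expansion and the vanishing of the cross term are available. This is precisely the reason the sample Fr\'echet mean in \eqref{sfmean} was set up with the $L^{2}$ norm.
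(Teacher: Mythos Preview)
Your argument is correct. The paper's proof reaches the same conclusion by a pointwise-then-integrate argument: for each fixed $x\in K$ it invokes the one-dimensional fact that the arithmetic mean of the real numbers $X_1(x),\dotsc,X_n(x)$ minimizes $\frac{1}{n}\sum_i |X_i(x)-Y(x)|^2$, and then integrates this inequality against $\eta$. Your version carries out the same computation abstractly at the Hilbert-space level via the inner-product expansion and the vanishing of $\sum_i(X_i-S_n)$; this is really the same identity, since the paper's pointwise step is exactly the one-dimensional case of your expansion. Your write-up has two small advantages: you explicitly check that $S_n\in S_d$ by convexity (the paper uses this tacitly), and your identity $\frac{1}{n}\sum_i\lVert X_i-Y\rVert_{2,\eta}^2=\frac{1}{n}\sum_i\lVert X_i-S_n\rVert_{2,\eta}^2+\lVert S_n-Y\rVert_{2,\eta}^2$ makes the uniqueness of the minimizer immediate, whereas the paper's inequality chain leaves uniqueness implicit.
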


\begin{proof} 
It is well-known that in a finite dimensional Euclidean space, the sample mean coincides with
 the sample Fr\'echet mean. For fixed $x\in K$, the  real-valued  random variables $ \lbrace X_{i}(x) \rbrace $ 
 satisfy
 \begin{equation*}
\frac{1}{n} \sum_{i=1}^{n} \lvert X_{i}(x) - S_{n}(x) \rvert^{2} \leqslant \frac{1}{n} \sum_{i=1}^{n} \lvert X_{i}(x) - Y(x) \rvert^{2},\  \forall Y \in \mathcal{C}(K).
\end{equation*}
Therefore, for all $Y$,
$$
 \frac{1}{n} \sum_{i=1}^{n} \int_{K} \lvert X_{i}(x) - S_{n}(x) \rvert^{2} dQ(x) \leqslant \frac{1}{n} \sum_{i=1}^{n} \int_{K} \lvert X_{i}(x) - Y(x) \rvert^{2} dQ(x),$$
 so
$$ \frac{1}{n} \sum_{i=1}^{n} \lVert X_{i} - S_{n} \rVert_{2,\eta}^{2} \leqslant \frac{1}{n} \sum_{i=1}^{n} \lVert X_{i} - Y \rVert_{2,\eta}^{2}, $$
which implies  
$$S_{n} = \text{argmin}_{Y \in S_{d}} \frac{1}{n} \sum_{i=1}^{n} \lVert X_{i} - Y \rVert_{2,\eta}^{2}.$$

\end{proof}

Combining the Proposition with Theorem \ref{thm:4.1} gives us a CLT for the sample Fr\'echet mean.  We
set $\Vert\cdot\Vert_{2,\eta}$ be the $L^2$ norm with respect to a measure $\mu$, and set $\calC_0(X)$ to be the set of bounded continuous functions on a topological space $X$. 

\begin{corollary}\label{cor:3.1}
(i)  Let $\{X_i\}$ be i.i.d. $ S_{d} $-valued random variables with distribution $\mu$ a
  Radon probability measure  supported in $ \imath_{d}(K) $ satisfying 
 (\ref{conds}).  
 Then there exists a Gaussian Radon probability measure 
 $ \tilde{\gamma}_{2} $ such that the distributions $ \mu_{n} $ of 
 $${\rm argmin}_{Y \in S_{d}} \frac{1}{\sqrt{n}} \sum_{i=1}^{n} \lVert X_{i} - Y \rVert_{2,\eta}^{2} $$ 
 converge weakly to $\tilde\gamma_2$ in the sense of Defintion \ref{defone}.
 
 (ii) $\gamma_2$ in Theorem \ref{thm:4.1} equals $\tilde\gamma_2$ in distribution. In particular, for
 $f\in \calC_0(S_d(K))$, 
\begin{equation*}
 \int_{S_d(K)} f d\gamma_{2} = \int_{S_d(K)} f d\tilde{\gamma}_{2}.
\end{equation*}
\end{corollary}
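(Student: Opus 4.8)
The plan is to deduce both parts directly from Proposition~\ref{prop:4.1}, which says that on $S_d$ the sample Fr\'echet mean for the $L^2(K,\eta)$ inner product coincides with the ordinary sample mean, together with the $L^2$-CLT of Theorem~\ref{thm:4.1}; the only genuinely new ingredient is the uniqueness of weak limits, which ties $\tilde\gamma_2$ to $\gamma_2$.

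For part (i): by Proposition~\ref{prop:4.1} applied to the i.i.d. $S_d$-valued random variables $\{X_i\}$, the sample Fr\'echet mean ${\rm argmin}_{Y\in S_d}\frac1n\sum_{i=1}^n\lVert X_i-Y\rVert_{2,\eta}^2$ equals the ordinary sample mean $S_n=\frac1n\sum_{i=1}^n X_i$, which lies in $S_d$ since $S_d$ is convex and closed. Hence the $\sqrt n$-normalized sample Fr\'echet mean in the statement is exactly $\frac{1}{\sqrt n}\sum_{i=1}^n X_i$, whose distribution is precisely the $\mu_n$ of Definition~\ref{defone}(ii) for the measure $\imath_{d,*}\mu$. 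Since $\imath_{d,*}\mu$ is a Radon probability measure on $S_d\subset L^2(K)$ satisfying (\ref{conds}), Theorem~\ref{thm:4.1} supplies a Gaussian Radon probability measure on $L^2(K)$, which we call $\tilde\gamma_2$, with $\mu_n\to\tilde\gamma_2$ weakly in the sense of Definition~\ref{defone}; this is the assertion of (i). For part (ii): Theorem~\ref{thm:4.1}, read through Definition~\ref{defone}(ii), already states that the distributions of $\frac{1}{\sqrt n}\sum_{i=1}^n X_i$ converge weakly to $\gamma_2$. By the identification just made these are the very same sequence $\mu_n$ converging to $\tilde\gamma_2$. Since weak limits of Borel probability measures on the separable metric space $L^2(K)$ are unique (bounded continuous functions separate such measures), $\gamma_2=\tilde\gamma_2$ as measures on $L^2(K)$, hence also on the compact subset $S_d(K)$; in particular $\int_{S_d(K)} f\,d\gamma_2=\int_{S_d(K)} f\,d\tilde\gamma_2$ for every $f\in\calC_0(S_d(K))$.

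I expect essentially no serious obstacle here: all the analysis lives in Proposition~\ref{prop:4.1} and Theorem~\ref{thm:4.1}, and the rest is bookkeeping. The one point deserving a line of care is the normalization: an argmin is unaffected by a positive rescaling of its objective, so the ``normalized sample Fr\'echet mean'' must be read as $\sqrt n\cdot{\rm argmin}_{Y\in S_d}\frac1n\sum_{i=1}^n\lVert X_i-Y\rVert_{2,\eta}^2$, which by the computation in the proof of Proposition~\ref{prop:4.1} is genuinely equal to $\frac{1}{\sqrt n}\sum_{i=1}^n X_i=\frac{X_1+\dots+X_n}{\sqrt n}$, matching Definition~\ref{defone}(ii). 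With this reading both parts follow as above.
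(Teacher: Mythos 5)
Your proof is correct and takes essentially the same route as the paper: Proposition \ref{prop:4.1} identifies the sample Fr\'echet mean on $S_d$ with the ordinary sample mean, and Theorem \ref{thm:4.1} together with uniqueness of weak limits yields both (i) and (ii). Your explicit note that the statement must be read as $\sqrt{n}$ times the argmin of the averaged objective (since an argmin is unchanged by positive rescaling) is a point the paper leaves implicit, but it matches the intended reading.
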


(iii) Let $\gamma_1$ be the limiting measure obtained in Theorem \ref{thm:3.1}.  Then $\gamma_1 = \gamma_2$. 

\begin{proof} (i) follows from Proposition \ref{prop:4.1}.  For (ii), the Proposition implies that  the distributions
 of the sample mean and the sample Fr\'echet mean are the same a.s.  For (iii), this seems to be because Theorems \ref{thm:3.1} and \ref{thm:4.1} are essentially the same.
 \end{proof}

% This is the beginning of Section 5.

\section{$L^2$ techniques and G-CLTs}

In this section, we embed the compact metric space $K$,  now 
equipped with a Radon measure $\eta$ and a modified metric, into $L^2(K,\eta)$ to produce an $L^2$ version of a  G-CLT.  In this Hilbert space setting, we are able to relate the Fr\'echet means of the closed convex hulls to the Fr\'echet means on the embedded image of $K$.

We  define a seminorm on $ \imath_{d}(K) = \{f_x = d(x,\cdot) : x\in K\}$ by
\begin{equation*}
\lVert f_{x} \rVert^2_{2, \eta} =\int_{\imath_{d}(K)} \lvert f_{x}(y) \rvert^{2} d\imath_{d, *} \eta(y)= \int_{K} d^{2}(x, y) d \eta(y).
\end{equation*}
Taking the completion the space of norm zero  functions gives the Hilbert space $ (L^{2}(\isd(K)), \lVert \cdot \rVert_{2, \eta}) $. More precisely, we will prove $L^2(\isd(K))$-CLTs for both the sample mean and the sample Fr\'echet mean.

The norm $\Vert\cdot\Vert_{2,\eta}$ induces a metric $d_{2,\eta}$ on $\isd(K)$.  Since $\isd:(K,d)\to(\isd(K),
d_{2,\eta})$ is easily continuous, we can pull back $d_{2,\eta}$ to a metric $d_\eta := \imath_{d}^{*} d_{2, \eta}$
 on $K$:

\begin{equation*}
 d_{ \eta}(x, y) = d_{2, \eta}(f_{x}, f_{y}) = \left(\int_K (d(x,z) - d(y,z))^2 d\eta(z)\right)^{1/2} .
\end{equation*}

Thus $\imath_{\dte}:(K, \dte)\to (\imath_{\dte}(K), d_{2,\eta})$, defined by $\imath_{\dte}(x) = \dte(x,\cdot)$, 
is an isometry.  We interpret $(K, \dte)$ as a modification of $(K,d)$ which keeps track of the $L^2$ information
of $\eta.$

We want to relate the various metrics. Let $d_\infty$ be the metric on $\calC(K)$ induced by the sup norm $\Vert\cdot\Vert_\infty,$ and let $[\isd(K)]$ be the image of $\isd(K)$ in $L^2(K).$  Consider the maps
$$(K, d) \stackrel{\isd}{\to}(\isd(K), d_\infty)\stackrel{F}{\to} ([\isd(K)], d_{2,\eta}) \stackrel{G}{\to} (K, d_\eta)$$
given by $F(f_x) = [f_x]$, where we take the $L^2$ equivalence class, and $G([f_x]) = \imath_{\dte}^{-1}(f_x)
= \imath_{\dte}^{-1}\isd(x).$
(We show that $G$ is well-defined below.)  $\isd$ is an isometry.

In general, $ F $ and $ G $ are not injective, since  equivalence classes in $ L^{2}(K) $ have many
representatives, without a restriction on $\eta.$ 

\begin{lemma} \label{lem:5.1} Assume that   every $d$-ball $ B_{\epsilon}(x)$ centered at  $x \in K$
has $ \eta(B_{\epsilon}(x)) > 0 $.
  Then $F$ is injective, and $G$ is well-defined and injective.
\end{lemma}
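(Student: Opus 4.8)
The plan is to derive all three assertions from one elementary observation: under the ball hypothesis, distinct points $x\neq y$ of $K$ produce functions $f_x=d(x,\cdot)$ and $f_y=d(y,\cdot)$ that disagree on a set of positive $\eta$-measure. Once this is in hand, injectivity of $F$ is immediate, and the statements about $G$ follow formally.

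First I would prove the observation and deduce injectivity of $F$. Suppose $F(f_x)=F(f_y)$, i.e.\ $f_x=f_y$ in $L^2(K,\eta)$, so $d(x,z)=d(y,z)$ for $\eta$-a.e.\ $z\in K$. If $x\neq y$, set $r=\tfrac12 d(x,y)>0$. For every $z\in B_r(x)$ the triangle inequality gives $d(x,z)<r$ while $d(y,z)\geq d(x,y)-d(x,z)>r$, so $d(x,\cdot)\neq d(y,\cdot)$ on all of $B_r(x)$. Since $\eta(B_r(x))>0$ by hypothesis, this contradicts $d(x,\cdot)=d(y,\cdot)$ $\eta$-a.e.; hence $x=y$, and since $\isd$ is injective, $f_x=f_y$. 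The same estimate shows $\dte(x,y)=\lVert f_x-f_y\rVert_{2,\eta}>0$ whenever $x\neq y$, so the ball hypothesis also makes $\dte$ a genuine metric (not merely a pseudometric) on $K$; in particular $\isde$ is injective and $\isde^{-1}$ is defined on $\isde(K)$, so the target $(K,\dte)$ and the formula defining $G$ make sense.

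Finally I would treat $G$. An element of $[\isd(K)]$ is an $L^2$-equivalence class; by the injectivity of $F$ just proved it has exactly one representative of the form $f_x$, and hence determines $x\in K$. Thus $G([f_x])=\isde^{-1}\isd(x)$, the point $x$ regarded in $(K,\dte)$, is independent of the representative — which is precisely what ``well-defined'' means — and if $G([f_x])=G([f_y])$, then $x=y$, so $f_x=f_y$ and $[f_x]=[f_y]$, giving injectivity. (Equivalently, $G$ is the set-theoretic inverse of the bijection $F\circ\isd:K\to[\isd(K)]$.) I do not expect a genuine obstacle here: the whole non-formal content is the one-line small-ball estimate, and the only thing requiring care is the bookkeeping — recording that ``$G$ well-defined'' is exactly the injectivity of $F$, and that the ball hypothesis simultaneously upgrades $\dte$ from a pseudometric to a metric so that $\isde$ is a legitimate object.
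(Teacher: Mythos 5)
Your proposal is correct and follows essentially the same route as the paper: use the reverse triangle inequality on a small ball around $x$ (the paper takes $\epsilon < d(x,y)/3$, you take $r = d(x,y)/2$) together with $\eta(B_\epsilon(x))>0$ to conclude that $f_x$ and $f_y$ differ on a set of positive measure, hence $[f_x]\neq[f_y]$, and then observe that well-definedness and injectivity of $G$ are exactly the uniqueness of the representative $f_x$ in each class. Your added remark that the hypothesis upgrades $\dte$ from a pseudometric to a metric, so that $\isde^{-1}$ is legitimately defined, is a harmless and accurate supplement to the paper's argument.
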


Since $F$ and $G$ are trivially surjective, they are bijective under the Lemma's hypothesis.  Note that 
for Lebesgue measure and the standard metric on $\R^N,$ the hypothesis is satisfied, while delta functions give rise to Radon measures that do not satisfy the hypothesis.  

\begin{proof} 
For $F$, it suffices to show that $F\circ \isd:x\mapsto [f_x]$ is injective.
For $ x \neq y $, 
and $\epsilon < d(x,y)/3$, we have
$$d(y,z) \geq d(x,y) - d(x,z)> 3\epsilon -\epsilon > d(x,z)+ \epsilon,$$
for all $z\in B_\epsilon(x).$
Therefore \begin{align*}
d_{2, \eta}([f_{x}], [f_{y}])^{2} &= \int_{K} \lvert f_{x}(z) - f_{y}(z) \rvert^{2} d\eta_{z} \geq \int_{B_{\epsilon}(x)} \lvert d(x, z) - d(y, z) \rvert^{2} d \eta_{z}\\
& > \epsilon^{2} \eta(B_{\epsilon}(x)) > 0.
\end{align*}
Thus $[f_x]\neq [f_y].$

To show that $G$ is well-defined, if $ f_{x}, f_y\in  [f_{x}] \in L^{2}(K) $, then
\begin{equation*}
d_{2, \eta}([f_{x}], [f_{y}]) = 0 \Rightarrow \int_{K} \lvert d(x, z) - d(y, z) \rvert^{2} d\eta(z) = 0.
\end{equation*}
As above, this implies that $x=y$, so $[f_x]$ has a unique representative of the form $f_x.$  Since $\isd,\imath_{\eta}$ are injective, it follows that $G$ is injective.

\end{proof}

We can now state and prove an $L^2$ CLT on  $ S_{\dte}(K) $ for both the sample mean and the sample Fr\'echet mean.  As before, let  $ S_{\dte} = S_{\dte}(K) $ be the closed convex hall of $ \imath_{\dte}(K) $ in the sup norm.

\begin{theorem} \label{thm:5.1}
Let $\{X_i\}$ be i.i.d. $ S_{\dte}$-valued random variables with distribution  $\mu$ a
  Radon probability measure  supported in $S_{\dte}(K) $ 
  satisfying  (\ref{conds}).  Assume that the hypothesis of Lemma \ref{lem:5.1} holds.
 
(i)  There exists a Gaussian Radon probability measure $ \gamma $ on $ S_{\dte} $ such that the distributions $ \mu_{n} $, of $ \frac{X_{1} + \dotso + X_{n}}{\sqrt{n}} $ converge to a probability measure
 $ \gamma $ in the sense of Definition \ref{defone}(ii).  
 
 (ii) The distributions $ \tilde{\mu}_{n} $ of $ 
 {\rm argmin}_{Y \in S_{\dte}} \frac{1}{\sqrt{n}} \sum_{i=1}^{n} \lVert X_{i} - Y \rVert_{2,\eta}^{2} $ 
 converge in the same
 sense  to the same  measure $ \gamma $.

 (iii) Under the hypothesis in Lemma \ref{lem:5.1}, the distributions $ \tilde{\mu}_{n} $ of\\
  $ 
 {\rm argmin}_{Y \in S_{\dte}} \frac{1}{\sqrt{n}} \sum_{i=1}^{n} \lVert X_{i} - Y \rVert_{\infty}^{2} $ 
 converge in the same
 sense  to a Gaussian Radon probability measure $ \tilde{\gamma} $.

\end{theorem}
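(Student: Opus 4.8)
The plan is to prove parts (i) and (ii) by reduction to the already-established $L^2$ machinery of Theorem~\ref{thm:4.1} and Proposition~\ref{prop:4.1}, and then to obtain part (iii) by transporting the convergence in (i)--(ii) through a suitable identification of norms on the finite-dimensional-in-spirit set $S_{\dte}(K)$. First I would observe that $S_{\dte}(K)$ is a compact convex subset of $\calC(K)$ (by \cite[Thm.~5.35]{border}, exactly as for $S_d(K)$), hence a compact convex subset of the Hilbert space $L^2(K,\eta)$ after passing to equivalence classes; under the hypothesis of Lemma~\ref{lem:5.1}, the map $F$ identifying functions in $\Lip(\dte)$ with their $L^2$-classes is injective, so no information is lost in this passage. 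Then part (i) is just Theorem~\ref{thm:4.1} applied with the metric $\dte$ in place of $d$: the random variables $\{X_i\}$ are $S_{\dte}$-valued with distribution satisfying \eqref{conds}, $L^2(K,\eta)$ is of type~2 and cotype~2, and the G-CLT holds with limiting Gaussian Radon measure $\gamma$.

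For part (ii), the key point is Proposition~\ref{prop:4.1}, whose proof is purely Hilbert-space and does not use anything about the metric $d$: for $S_{\dte}$-valued random variables the sample $L^2$-Fr\'echet mean equals the ordinary sample mean $S_n = \frac1n\sum X_i$ almost surely. Since $S_{\dte}$ is closed and convex, ${\rm argmin}_{Y\in S_{\dte}}\frac1n\sum_i\|X_i-Y\|_{2,\eta}^2$ exists and is unique, and coincides with $S_n$; scaling by $\sqrt n$, the distributions $\tilde\mu_n$ of $\sqrt n\, S_n$ are literally the distributions $\mu_n$ of $(X_1+\cdots+X_n)/\sqrt n$ up to the harmless rescaling already built into the statement, so they converge to the same $\gamma$. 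I would spell this out in one or two lines, citing Proposition~\ref{prop:4.1} verbatim with $d$ replaced by $\dte$.

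Part (iii) is where the real work lies, since it replaces $\|\cdot\|_{2,\eta}$ by $\|\cdot\|_\infty$ in the Fr\'echet functional. The plan is: on the compact convex set $S_{\dte}(K)\subset\calC(K)$ the functional $Y\mapsto\frac1n\sum_i\|X_i-Y\|_\infty^2$ is still convex and coercive, so the argmin exists; for uniqueness and for the CLT I would argue that on $S_{\dte}(K)$ the sup-norm and the $L^2(K,\eta)$-norm are \emph{equivalent}. Under the hypothesis of Lemma~\ref{lem:5.1} the identity map $(S_{\dte},d_\infty)\to(S_{\dte},d_{2,\eta})$ is a continuous bijection between compact Hausdorff spaces, hence a homeomorphism; since $S_{\dte}$ is compact this upgrades (using the homogeneity/translation structure exploited in Lemma~\ref{lem:two}, or a direct compactness argument on differences) to a genuine norm equivalence $c\|\cdot\|_\infty\le\|\cdot\|_{2,\eta}\le\|\cdot\|_\infty$ on the finite-diameter set of differences arising in $S_{\dte}$. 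Norm equivalence lets me transport the G-CLT: the bounded-continuous test functions are the same for the two topologies, the notion of Gaussian Radon measure transfers since every nontrivial linear functional for one norm is nontrivial for the other, and convexity of $\|\cdot\|_\infty^2$ gives uniqueness of the $\infty$-Fr\'echet mean. Then $\tilde\mu_n$ converges weakly to the pushforward $\tilde\gamma$ of $\gamma$ under the (identity set-map, different-norm) identification, which is again Gaussian Radon. The main obstacle I anticipate is making the norm-equivalence step honest: the two norms are equivalent on the compact set $S_{\dte}$ itself, but the Fr\'echet argmin involves the full functional $\frac1n\sum\|X_i-Y\|_\infty^2$ over $Y\in S_{\dte}$, so I must be careful that minimizers of the $\infty$-functional and the $2$-functional, while not equal, have distributions that are weakly-continuous images of one another — this requires either an explicit Lipschitz bound on the argmin map as a function of the empirical measure, or an appeal to a continuous-mapping/Slutsky-type argument valid because both functionals are strictly convex on the compact convex set $S_{\dte}$ with moduli controlled uniformly in $n$. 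I would present the norm-equivalence lemma first, then feed it into a continuous-mapping argument to conclude.
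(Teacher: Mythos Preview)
Your treatment of (i) and (ii) is fine and matches the paper's: both parts follow by running the $\dte$-version of Theorem~\ref{thm:4.1} (or equivalently Theorem~\ref{thm:3.1}) and Proposition~\ref{prop:4.1}/Corollary~\ref{cor:3.1}.

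Part (iii), however, has a real gap, and the gap is exactly where you yourself flag trouble. Your plan is to establish a \emph{norm equivalence} $c\lVert\cdot\rVert_\infty\le\lVert\cdot\rVert_{2,\eta}\le\lVert\cdot\rVert_\infty$ on $S_{\dte}$ and then push the CLT through a continuous-mapping argument. But (a) $S_{\dte}$ is not a linear space, so ``norm equivalence on $S_{\dte}$'' is not a meaningful statement; at best you get comparability of distances on a compact set, which is just the homeomorphism you already have and does not control argmins. (b) Even if the two norms were genuinely equivalent on the linear span, the $\lVert\cdot\rVert_\infty$-Fr\'echet mean and the $\lVert\cdot\rVert_{2,\eta}$-Fr\'echet mean are \emph{different} random variables, and no amount of norm comparability makes one a continuous image of the other; your proposed Lipschitz/Slutsky step is not just delicate, it does not go through. (c) The reverse inequality $c\lVert f\rVert_\infty\le\lVert f\rVert_{2,\eta}$ is in fact false in general on $\calC(K)$ and only holds for the special functions $f_x$ under extra metric-measure hypotheses---this is precisely the content of \S6 of the paper, which is \emph{not} assumed in Theorem~\ref{thm:5.1}.

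The idea you are missing is that the very definition of $\dte$ was rigged to produce not an equivalence but an exact \emph{isometry}: since $\dte(x,y)=\lVert f_x-f_y\rVert_{2,\eta}$ by construction and $\lVert f_x^{\dte}-f_y^{\dte}\rVert_\infty=\dte(x,y)$ by Proposition~3.1 applied to $\dte$, the map $\isd\circ\imath_{\dte}^{-1}$ is an isometric bijection $(\imath_{\dte}(K),\lVert\cdot\rVert_\infty)\to(\isd(K),\lVert\cdot\rVert_{2,\eta})$, extending linearly to the closed convex hulls. Transporting the $\lVert\cdot\rVert_\infty$-Fr\'echet functional on $S_{\dte}$ through this isometry turns it into the $\lVert\cdot\rVert_{2,\eta}$-Fr\'echet functional on $S_d$, where Proposition~\ref{prop:4.1} identifies the argmin with the sample mean of the transported variables $\isd\circ\imath_{\dte}^{-1}(X_i)$; Theorem~\ref{thm:4.1} then gives the CLT, and pushing back along the isometry yields $\tilde\gamma$. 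This bypasses entirely the norm-comparison and continuous-mapping issues you raise.
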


\begin{proof} 
(i) Replacing the metric $d$ by $\dte$ in Theorem \ref{thm:3.1} gives the CLT for the sample mean.  

(ii) Applying Corollary \ref{cor:3.1}(i) and (iii) to $\dte$ gives the convergence of $\tilde\mu_n$ to the same measure $\gamma.$ 

(iii) 
The main idea is to use the isometric bijection $\imath_{\dte}\circ \isd^{-1}: (\isd(K), \Vert\cdot\Vert_\infty)\to (\imath_{\dte}(K), 
\Vert\cdot\Vert|_{2,\eta}).$  This extends linearly to an isometric bijection
$$\imath_{\dte}\circ \isd^{-1}: (S_d(K), \Vert\cdot\Vert_\infty)\to (S_{\dte}(K), 
\Vert\cdot\Vert|_{2,\eta}).$$
Set
\begin{equation*}
Z_{n} := \text{argmin}_{Y \in S_{\dte} }\frac{1}{n} \sum_{i=1}^{n} \lVert X_{i} - Y \rVert_{\infty}^{2}.
\end{equation*}
By Proposition \ref{prop:4.1},
\begin{align*}
Z_{n} & = 
 \text{argmin}_{Y \in S_{d_{2, \eta}}} \frac{1}{n} \sum_{i=1}^{n} \lVert \imath_{d} \circ \imath_{\dte}^{-1} X_{i} - \imath_{d} \circ \imath_{\dte}^{-1} Y \rVert_{2, \eta}^{2} \\
& = 
\imath_{\dte}\circ\isd^{-1}\left(
\text{argmin}_{Y \in S_d} \frac{1}{n} \sum_{i=1}^{n} \lVert \imath_{d} \circ \imath_{\dte}^{-1} X_{i} - Y \rVert_{2, \eta}^{2}\right)\\
 &= 
 \imath_{\dte}\circ\isd^{-1}\left(
 \frac{1}{n} \sum_{i=1}^{n} \imath_{d} \circ \imath_{\dte}^{-1} X_{i}\right).
\end{align*}
$\{(\imath_{d} \circ \imath_{\dte}^{-1}) (X_{i})\}$ are $S_d$-valued i.i.d. random variables with common distribution 
$(\imath_{d} \circ \imath_{\dte}^{-1})_*\mu.$  By Theorem \ref{thm:4.1}, we obtain a 
$S_d$-CLT with respect to a Gaussian Radon measure 
$\gamma'$ on $S_d$.  The isometry $\imath_{\dte}\circ \isd^{-1}$ then gives the $S_{\dte}$-CLT with respect to
$(\imath_{\dte}\circ \isd^{-1})_*\gamma'.$
\end{proof}

%%%%%%%%New stuff

Because we are in a Hilbert space setting, we can prove that the Fr\'echet sample and population means on 
$S_{\dte}$ and on $\isde(K)$ have simple relationships.

Let  $S_{\dte}$ be the closed convex hull of $\imath_{\dte}(K) := K_0$ in $L^2(K, \eta),$ and let $f^2_y = d_\eta(y, \cdot) \in \calC(K).$   Let 
\begin{align*}\bar F(\bar x) &= %{\rm argmin}_{\bar x\in S_d} 
\int_{S_{\dte}} d^2_{L^2}(\bar x,\bar y)d\imath_{\dte,*}\eta(\bar y) 
= \int_{K_0} d^2_{L^2}(\bar x,\bar y)d\imath_{\dte,*}\eta(\bar y) 
 = %{\rm argmin}_{\bar x\in S_d} 
 \int_K d^2_{L^2}(\bar x, f^2_y) d\eta(y),\\
 F(x) &= %{\rm argmin}_{ x\in K}
  \int_K d^2_{\eta}(x,y) d\eta(y) = %{\rm argmin}{f^2_x\in \isde(K)} 
 \int_{K_0} d^2_{L^2}(f^2_x, f^2_y) )d\imath_{\dte,*}\eta(f^2_y),
 \end{align*}
 be the $L^2$ Fr\'echet functions  of $S_{\dte}$ and $K$, respectively, and let 
 $$\bar \mu = {\rm argmin}_{\bar x\in S_{\dte}} \bar F(\bar x),\ \mu = {\rm argmin}_{ x\in K} \int_K d^2_\eta(x,y) d\eta(y)$$
 be the population means on $S_{\dte}$ and $K$, respectively.  Set $\mu_0 = \isde(\mu).$
 
 We note that  as the minimum of a convex function on a convex set, $\bar\mu$ is unique.  Also, gradients of differentiable functions exist in Hilbert spaces.  

\begin{proposition}\label{prop:5.1}  Assume that (i) $\mu$ is unique, (ii) $K_0$
%$\isde(\partial K)$ 
is the zero set of a Fr\'echet differentiable function $H:L^2(K,\eta)\to \R$ with
$\nabla H_{\mu_0} \neq 0.$   Then
$\mu_0$ is the closest point in $K_0$ to $\bar\mu.$  The same relationship holds for the sample 
Fr\'echet means of $K_0$-valued i.i.d.~random variables.
\end{proposition}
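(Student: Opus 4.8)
The plan is to transport the Fréchet function $F$ on $K$ through the isometry $\isde$ into the Hilbert space $L^2(K,\eta)$, where -- exactly as in Proposition~\ref{prop:4.1} -- the Fréchet function is the squared distance to the barycenter up to an additive constant; minimizing it over $K_0$ then literally computes the metric projection of $\bar\mu$ onto $K_0$.

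First I would establish the transport identity $F = \bar F\circ\isde$ on $K$. Since $\isde\colon(K,\dte)\to(K_0,d_{L^2})$ is an isometry, $d_{L^2}(\isde(x),\isde(y))=\dte(x,y)$, so pushing $\eta$ forward by $\isde$ (a bijection onto $K_0$) gives, for $x\in K$,
$$\bar F(\isde(x))=\int_{K_0}d_{L^2}^2(\isde(x),\bar y)\,d\imath_{\dte,*}\eta(\bar y)=\int_K d_{L^2}^2(\isde(x),\isde(y))\,d\eta(y)=\int_K\dte^2(x,y)\,d\eta(y)=F(x).$$
Hence $x$ minimizes $F$ over $K$ iff $\isde(x)$ minimizes $\bar F$ over $K_0$; by assumption~(i) the unique minimizer of $F$ is $\mu$, so $\mu_0=\isde(\mu)$ is the unique minimizer of $\bar F$ restricted to $K_0$.

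Next I would identify $\bar\mu$ with a barycenter and rewrite $\bar F$. Write $\nu=\imath_{\dte,*}\eta$, a probability measure on the compact set $K_0$, and let $b=\int_{K_0}\bar y\,d\nu(\bar y)\in L^2(K,\eta)$ be its barycenter ($K_0$ is $L^2$-bounded, so this Bochner integral exists). For every $\bar x\in L^2(K,\eta)$,
$$\bar F(\bar x)=\int_{K_0}\|\bar x-\bar y\|_{2,\eta}^2\,d\nu(\bar y)=\|\bar x-b\|_{2,\eta}^2+c,\qquad c=\int_{K_0}\|\bar y\|_{2,\eta}^2\,d\nu(\bar y)-\|b\|_{2,\eta}^2,$$
with $c$ independent of $\bar x$. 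Since $S_{\dte}$ is compact and convex (Definition~\ref{cch}, \cite[Thm.~5.35]{border}) and $K_0\subseteq S_{\dte}$, the barycenter satisfies $b\in S_{\dte}$, so the identity shows $\bar\mu={\rm argmin}_{\bar x\in S_{\dte}}\bar F(\bar x)=b$; this is the population form of Proposition~\ref{prop:4.1}. Substituting, $\bar F(\bar x)=\|\bar x-\bar\mu\|_{2,\eta}^2+c$ on all of $L^2(K,\eta)$, so minimizing $\bar F$ over $K_0$ is the same as minimizing $\bar x\mapsto\|\bar x-\bar\mu\|_{2,\eta}$ over $K_0$; combined with the previous step, $\mu_0$ is the (unique) point of $K_0$ nearest $\bar\mu$. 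Hypothesis~(ii) then records this projection in the familiar analytic form: as the constrained minimizer of $\tfrac12\|\cdot-\bar\mu\|_{2,\eta}^2$ on the level set $K_0=\{H=0\}$, the point $\mu_0$ satisfies $\mu_0-\bar\mu=\lambda\,\nabla H_{\mu_0}$ for some $\lambda\in\R$ (legitimate since $\nabla H_{\mu_0}\neq0$), i.e.\ $\bar\mu-\mu_0$ is normal to $K_0$ at $\mu_0$. For the sample statement I would repeat the argument with $\nu$ replaced by the empirical measure $\tfrac1n\sum_i\delta_{X_i}$: Proposition~\ref{prop:4.1} gives $S_n=\tfrac1n\sum_iX_i$ as the sample Fréchet mean over $S_{\dte}$, while over $K_0$ the empirical Fréchet function equals $\|\cdot-S_n\|_{2,\eta}^2+\text{const}$, so any sample Fréchet mean on $K_0$ is a point of $K_0$ nearest $S_n$, unique whenever the defining ${\rm argmin}$ is.

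The argument is largely bookkeeping, so the points to watch -- rather than a genuine obstacle -- are: (a) that $\bar F|_{K_0}$ really equals the transported Fréchet function $F$, which forces the use of the isometry $\isde$ rather than the non-isometric inclusion $K\hookrightarrow L^2(K,\eta)$; and (b) that the barycenter $b$ lands in $S_{\dte}$, so that $\bar\mu=b$ -- this is precisely where compactness and convexity of $S_{\dte}$ enter (for the bare convex hull without closure the identity $\bar\mu=b$ could fail). Everything else reduces to the Hilbert-space identity $\int\|\bar x-\bar y\|^2\,d\nu(\bar y)=\|\bar x-b\|^2+(\text{$\nu$-variance})$.
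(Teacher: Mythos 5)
Your proof is correct, but it reaches the conclusion by a genuinely different (and more elementary) route than the paper. The paper works infinitesimally: it invokes Lagrange multipliers in $L^2(K,\eta)$ with the constraint $H=0$, computes $\nabla \bar F_{p} = 2\bigl(p - \int_{S_{\dte}}\bar y\, d\imath_{\dte,*}\eta(\bar y)\bigr)$, identifies $\bar\mu$ with that barycenter, translates so $\bar\mu=0$ to see that the level sets of $\bar F$ are spheres (the gradient being the Euler field), and then runs a second Lagrange-multiplier argument for the distance function on $K_0$ to conclude that the minimizer of $\bar F|_{K_0}$ is the nearest point; hypothesis (ii) is used essentially there, to make sense of the normal direction to $K_0=\{H=0\}$. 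You instead prove the global algebraic identity $\bar F(\bar x)=\lVert \bar x-b\rVert_{2,\eta}^2+c$ with $b$ the barycenter, check $b\in S_{\dte}$ so $\bar\mu=b$, and make explicit the transport identity $F=\bar F\circ\isde$ (which the paper uses only implicitly when it says $\mu_0$ lies on the lowest level set met by $K_0$); the nearest-point statement is then immediate, with no differentiability of $H$ and no Lagrange multipliers, so hypothesis (ii) enters only to restate the conclusion as $\mu_0-\bar\mu$ being normal to $K_0$. The two arguments share the same geometric core -- $\bar\mu$ is the center of mass and the level sets of $\bar F$ are spheres about it -- but your version buys a shorter, assumption-light proof (it would survive even if $K_0$ were not a nice level set), while the paper's variational route additionally exhibits the perpendicularity $\mu_0-\bar\mu\perp K_0$ at every critical point, which is the extra geometric information hypothesis (ii) is there to provide. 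Your treatment of the sample case via the empirical measure and Proposition \ref{prop:4.1} matches the paper's one-line reduction. Two small points to tighten: the claim that the Bochner barycenter of a measure supported on the compact set $K_0$ lies in the closed convex hull $S_{\dte}$ deserves a citation or a one-line separation argument, and your parenthetical that $\isde$ is a bijection onto $K_0$ tacitly uses that $\dte$ is a genuine metric (positive-definiteness, as in Lemma \ref{lem:5.1}), though your argument only needs surjectivity, so nothing breaks.
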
  

\begin{proof}  %$\bar F$ and $F$ are differentiable functions on $S_{\dte}, \isde(K)$, respectively,
%and $\bar F|_{K_0} = F.$  If $\mu$ is in the interior of $K$, then $0\neq \nabla \bar F_\mu = \nabla
%F_\mu$, so $F$ decreases in the direction of $-\nabla F_\mu$.  Since $\mu_0 -\epsilon \nabla F_\mu$ stays in $K$ for some $\epsilon$, this contradicts the definition of $\mu_0.$  Thus $\mu_0\in \partial K_0.$ \textcolor{red}{Change this, as $K_0$ has no interior.}
Note that $K_0 = \partial K_0$ in $L^2(K, \eta)$, since a compact subset of an infinite dimensional space has no interior.  Also,  $\bar\mu \in K_0$ implies $\bar\mu = \mu_0$, so we may assume $
\bar\mu \not\in K_0.$

The method of Lagrangian multipliers is valid in $L^2(K,\eta)$, so there exists $\lambda\in\R$ with
$\nabla \bar F_{\mu_0} = \lambda \nabla H_{\mu_0}.$  The differential $D\bar F$ at $\mu_0$  is given by
\begin{align*} 
D\bar F_{\mu_0}(v) &= (d/dt)|_{t=0}  \int_{S_{\dte}} d^2_{L^2}(\mu_0 + tv, f^2_y) d\eta(y)\\
& =
(d/dt)|_{t=0}  \int_{K}%S_{\dte}}
  \langle \mu_0 + tv- f^2_y,\mu_0 + tv- f^2_y\rangle d\eta(y)\\
& = 2\left\langle v,
\mu_0 - \int_K  f^2_y d\eta(y)\right\rangle,
\end{align*}
where the last term equals the Hilbert space integral 
$$\int_{K_0}\bar y d\imath_{\dte,*}\eta(\bar y) = \int_{S_{\dte}}\bar y d\imath_{\dte,*}\eta(\bar y).$$
Thus $\nabla \bar F_{\mu_0} =2( \mu_0 - \int_{S_{\dte}}\bar y d\imath_{\dte,*}\eta(\bar y)).$  Since 
$\nabla \bar F_p = 0$ only at $p = \bar\mu$, we see that $\bar \mu = \int_{S_{\dte}}\bar y d\imath_{\dte,*}\eta(\bar y).$ (This is the usual statement that the Fr\'echet mean is the center of mass of a convex set in
$\R^n.$)  Thus $\nabla \bar F_{\mu_0} =2( \mu_0 - \bar\mu).$  

Since $ \mu_0 - \bar\mu$ is a multiple of $\nabla H_{\mu_0}$, which is perpendicular to the level set $\isde(K)$,
we have $\mu_0-\bar\mu \perp \partial K_0$.  We have not used that $\mu_0$ is a minimum, so the same perpendicularity holds at any critical point of 
$\bar F$ on $\partial K_0.$  We can translate in $S_{\dte}$ so that $\bar \mu = 0$, in which case
$\nabla \bar F_p = 2p$ is twice the Euler vector field.  The level sets $\bar F^{-1}(r)$ are thus spheres centered at the origin in $S_{\dte}.$  Since $\mu_0$ is on the lowest level set of any point in $K_0$, $\mu_0$ is closer to 
the origin than any other critical point of $\bar F$ on $\partial K_0.$ 

If we consider the distance function $D:K_0\to \R, D(\bar x) = d^2(\mu_0, x)$, then a Lagrangian multiplier argument as above shows that at a critical point $p$ of $D$, we have $\mu_0-p \perp \partial K_0.$  Thus 
$\mu_0$ is a critical point of $D$, and by the last paragraph $\mu_0$ must be the closest point in $K_0$ to 
$\mu_0.$

The same argument holds for the sample means.
\end{proof}

If a closest point  $p(z)\in  K_0$ to each 
$z\in S_{\dte}$ can be chosen so that $p$ is continuous, as in the unlikely case that $K_0$ is convex, then we  get a G-CLT on $K_0$ and hence on $K$ for $p_*\tilde \mu_n, p_*\gamma.$  This would 
connect Theorem \ref{thm:5.1} and Proposition \ref{prop:5.1}.

% This is the beginning of Section 6.
\section{Relating $L^p$ norms}

We have results for $L^\infty$- and $L^2$-CLTs, so we wish to compare the associated metrics on $\isd(K)$. 
Since $\isd(K)$ is compact, all norms are abstractly equivalent.
 In this section, we introduce a metric-measure assumption under which the $L^p$ norms on $\isd(K)$ are 
explicitly equivalent, i.e., the constants in the norm comparisons are explicit.

Of course, for a probability measure  $\eta$ on $K$, we have for $f\in\calC(K)$ and $p\in [1,\infty)$
$$\Vert f \Vert_p^p = \int_K |f|^p d\eta \leq \Vert f\Vert _\infty^p \int_K d\eta = \Vert f\Vert_\infty^p.$$
Thus %$\Vert f-g\Vert_2 \leq \Vert f-g\Vert_\infty.$
\begin{equation}\label{eq:6.1} d_p(f,g) \leq d_\infty(f,g),
\end{equation}
 for $f, g\in \calC(K)$ and $d_p, d_\infty$, respectively, the $L^p, L^\infty$ metrics, respectively.  

We would like a reverse inequality on $\isd(K)$ in (\ref{eq:6.1}), which is impossible without any assumptions.  In particular, we assume that $f_x = d(x,\cdot) \in \isd(K)$ is $\eta$-measurable for $x\in K.$
We also strengthen the triangle inequality
$$|d(x,z) - d(y,z) | \leq d(x,y)$$
as follows:
\medskip

\noindent {\bf Assumption:}
There exist $C, D\in (0,1)$ such that for all $x,y\in K$ 
\begin{equation}\label{eq:6.2}\eta\{z\in K: |d(x,z) - d(y,z)| \leq D\cdot d(x,y)\} < C.
\end{equation}

\medskip
The intuition is that
for $D =0$, a workable $\eta$ has $\eta\{z\in K: |d(x,z) - d(y,z)| \leq D\cdot d(x,y)\} = 0$, as 
for Lebesgue measure. For
$D$ close to zero, we demand that 
$$\max_{x,y\in K} \eta\{z\in K: |d(x,z) - d(y,z)| \leq D\cdot d(x,y)\}$$
should be strictly less than $1 = \eta(K)$.  The assumption fails for delta function measures on $\R^n$, but appears to hold for normalized Legesgue measure on compact subsets of $\R^n.$

\begin{proposition}  Assume that $f_x$ is $\eta$-measurable for $x\in K$ and 
that (\ref{eq:6.2}) holds. For $p\in [1,\infty), p'\in [1,\infty]$ with $p < p'$,
we have
$$D(1-C)^{1/p}\cdot d_{p'}( f_x,f_y) \leq  d_p( f_x,f_y) \leq d_{p'}( f_x,f_y),$$
for all $x, y\in K.$  Thus the $L^p$ norms are explicitly equivalent.
\end{proposition}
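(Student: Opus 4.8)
The plan is to prove the two one-sided inequalities separately. The right-hand inequality $d_p(f_x,f_y)\le d_{p'}(f_x,f_y)$ is just the monotonicity of $L^q$-norms over a probability space and generalizes (\ref{eq:6.1}); the left-hand inequality carries all the content and will come directly from the Assumption (\ref{eq:6.2}).

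For the upper bound I would note that, since $\eta$ is a probability measure, Jensen's inequality applied to the convex function $t\mapsto t^{p'/p}$ (when $p'<\infty$), or the trivial bound (when $p'=\infty$), gives $\Vert h\Vert_{p,\eta}\le \Vert h\Vert_{p',\eta}$ for every $\eta$-measurable $h$; taking $h=f_x-f_y$ yields $d_p(f_x,f_y)\le d_{p'}(f_x,f_y)$.

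For the lower bound I would set $g(z)=\lvert f_x(z)-f_y(z)\rvert=\lvert d(x,z)-d(y,z)\rvert$ and $a=d(x,y)$; the measurability hypothesis on $f_x,f_y$ makes $g$ measurable and the sublevel set $E:=\{z\in K: g(z)\le D\cdot a\}$ $\eta$-measurable. The case $x=y$ is trivial since all quantities vanish, so assume $a>0$. The ordinary triangle inequality gives $0\le g(z)\le a$ for all $z$, hence $d_{p'}(f_x,f_y)=\Vert g\Vert_{p',\eta}\le a$ (for finite $p'$ and for $p'=\infty$ alike). By (\ref{eq:6.2}), $\eta(E)<C$, so $\eta(K\setminus E)>1-C$, and therefore
$$d_p(f_x,f_y)^p=\int_K g^p\,d\eta\ \ge\ \int_{K\setminus E} g^p\,d\eta\ \ge\ (D a)^p\,\eta(K\setminus E)\ >\ (D a)^p(1-C).$$
Taking $p$-th roots gives $d_p(f_x,f_y)>D a(1-C)^{1/p}\ge D(1-C)^{1/p}\,d_{p'}(f_x,f_y)$, and combining with the upper bound proves the displayed chain. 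Explicit equivalence of the $L^p$-norms on $\isd(K)$ follows because the constant $D(1-C)^{1/p}\in(0,1)$ depends only on $p$, $C$, $D$, which are the data of (\ref{eq:6.2}).

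The computation is entirely routine, and there is no substantive obstacle: all the difficulty has been front-loaded into the Assumption (\ref{eq:6.2}) itself. The only points requiring a little care are keeping the case $p'=\infty$ in mind when bounding $d_{p'}(f_x,f_y)$ by $a$ and when invoking norm monotonicity, and using the measurability hypothesis so that restricting the integral to $K\setminus E$ is legitimate.
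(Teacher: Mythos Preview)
Your proof is correct and follows essentially the same approach as the paper: the upper bound is the standard monotonicity of $L^q$-norms on a probability space (the paper phrases it via H\"older, you via Jensen, but these are the same fact), and the lower bound is obtained exactly as in the paper by restricting the integral to the complement of the sublevel set in (\ref{eq:6.2}) and then using $d_{p'}(f_x,f_y)\le d(x,y)=d_\infty(f_x,f_y)$. Your explicit handling of the trivial case $x=y$ and the $p'=\infty$ endpoint is a minor cosmetic addition.
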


\begin{proof}
The fact that $d_p( f_x,f_y) = \Vert f_x-f_y\Vert_p\leq \Vert f_x-f_y\Vert_{p'} =d_{p'}( f_x,f_y)$ 
is contained in the H\"older inequality proof that $L^{p'}\subset L^{p}$ on a finite measure space.

Set $S_{x,y} = \{z\in K: |d(x,z) - d(y,z)| \leq D\cdot d(x,y)\}.$
Using (\ref{eq:6.2}), we have 
\begin{align*}  \Vert f_x - f_y\Vert_p^p &\geq \int_{K\setminus S_{x,y}} |d(x,z) - d(y,z)|^p d\eta(z)
\geq \int_{K\setminus S_{x,y}} D^p\cdot d^p(x,y)\  d\eta(z)\\
&\geq D^p(1-C) d^p(x,y) = D^p(1-C)\Vert f_x- f_y\Vert^p_\infty.
\end{align*}
Thus
$$d_p(f_x, f_y) \geq D(1-C)^{1/p}d_\infty(f_x, f_y)\geq  D(1-C)^{1/p} d_{p'}(f_x, f_y),$$
by (\ref{eq:6.1}).  
\end{proof}

\begin{remarks}
(i) The $d_p$ pull back to metrics on $K$, also denoted $d_p$, which by the Proposition are all explicitly equivalent.

(ii) For fixed $x,y$, set $K^p_D = 
\{z\in K: |d_p(x,z) - d_p(y,z)| \leq D\cdot d_p(x,y)\}.$  Thus $K^\infty_D$ is the set in (\ref{eq:6.2}).  From the estimates in the proof above, we obtain
\begin{align*} 
\Vert f_x-f_z\Vert_{p'} - \Vert f_y-f_z\Vert_{p'}  &\leq D^{-1}(1-C)^{-1/p}\Vert f_x-f_z\Vert_p - 
\Vert f_y-f_z\Vert_p,\\
D\Vert f_x-f_y\Vert_p &\leq D\Vert f_x-f_y\Vert_{p'}.
\end{align*} 
Thus
$$\{z: D^{-1}(1-C)^{-1/p}\Vert f_x-f_z\Vert_p -\Vert f_y-f_z\Vert_ p
\leq  D\Vert f_x-f_y\Vert_p\} \subset K^{p'}_D,$$
and the same with $x, y$ switched.
Since 
 $D^{-1}(1-C)^{-1/p} >1$, we obtain
\begin{align*}\MoveEqLeft[4]
  \{z:d_p(x,z) - d_p( y,z) \leq D d_p(x,y)\} \\ 
   &\subset
\{z: D^{-1}(1-C)^{-1/p}\Vert f_x-f_z\Vert_p -\Vert f_y-f_z\Vert_ p
\leq  D\Vert f_x-f_y\Vert_p\} \subset K^{p'}_D,
\end{align*}
using $d_p(x,y) = d_p(f_x, f_y).$  The same holds with $x, y$ switched, so
%$$\bigl|\Vert f_x-f_z\Vert_r - \Vert f_y-f_z\Vert_p\bigr| \leq D\Vert f_x-f_y\Vert_p,$$ 
$K^p_D \subset K^{p'}_D$ for $p, p'$ as in the Proposition.  Thus (\ref{eq:6.2}) has the greatest chance of holding for $d = d_1.$
\end{remarks}

\bibliographystyle{plain}
\bibliography{CLT}

\end{document}